\newtheorem{theorem}{Theorem}[section]
\newtheorem{proposition}[theorem]{Proposition}
\newtheorem{lemma}[theorem]{Lemma}
\newtheorem{corollary}[theorem]{Corollary}
\newtheorem{remark}[theorem]{Remark}
\newtheorem{definition}{Definition}
\numberwithin{equation}{section}
\newcommand{\Sp}{\mathbb{S}}
\newcommand{\Hy}{\mathbb{H}}
\newcommand{\R}{\mathbb{R}}
\newcommand{\E}{\mathbb{E}}
\newcommand{\pl}{\langle}
\newcommand{\pr}{\rangle}
\newcommand{\Qt}{\mathbb{Q}}
\newcommand{\Q}{\mathbb{Q}_{\epsilon}^n}
\newcommand{\Qq}{\mathbb{Q}_{\epsilon}^{n+1}}
\newcommand{\dt}{\partial_t}
\newcommand{\dd}{\mathrm{d}}
\newcommand{\tr}{\mathrm{\,trace}\,}
\newcommand{\di}{\mathrm{div}\,}
\newcommand{\grad}{\mathrm{\,grad\, }}
\newcommand{\A}{\mathcal{A}}
\newcommand{\tf}{\tilde f}
\newcommand{\spa}{\mathrm{span}}
\newcommand{\Sn}{\Sp^n\times\R}
\newcommand{\Qn}{\Q\times\R}
\newcommand{\Qm}{\Qq\times\R}
\newcommand{\Hn}{\Hy^n\times\R}
\newcommand{\Qf}{\Qt^4_\epsilon\times\R}
\newcommand{\Id}{\text{Id}}
\newcommand{\diag}{\mathrm{diag}}
\newcommand{\Addresses}{{
\bigskip 
\footnotesize
Fernando Manfio -- Universidade de S\~ao Paulo, Brazil \\
\textit{E-mail address:} \texttt{manfio@icmc.usp.br}
\medskip\\
Nurettin Cenk Turgay -- Istanbul Technical University, Turkey \\
\textit{E-mail address:} \texttt{turgayn@itu.edu.tr}
\medskip\\
Abhitosh Upadhyay -- Harish Chandra Research Institute, India \\
\textit{E-mail address:} \texttt{abhi.basti.ipu@gmail.com, abhitoshupadhyay@hri.res.in}
}}
\title{Biconservative submanifolds in $\Sn$ and $\Hn$}
\date{}
\author{F. Manfio, N. C. Turgay and A. Upadhyay}
\begin{document}

\maketitle

\begin{abstract}
In this paper we study biconservative submanifolds in $\Sn$ and
$\Hn$ with parallel mean curvature vector field and co-dimension $2$.
We obtain some necessary and sufficient conditions for such 
submanifolds to be conservative. In particular, we obtain a complete
classification of $3$-dimensional biconservative submanifolds in 
$\Sp^4\times\R$ and $\Hy^4\times\R$ with nonzero parallel mean 
curvature vector field. We also get some results for biharmonic 
submanifolds in $\Sn$ and $\Hn$.
\end{abstract}

\noindent {\bf MSC 2010:} Primary: 53A10; Secondary: 53C40, 53C42 \vspace{2ex}

\noindent {\bf Key words:} {\small {\em Biconservative submanifolds, 
biharmonic submanifolds, product \- spaces $\Sn$ and $\Hn$.}}

\section{Introduction}

Roughly speaking, {\em biconservative} submanifolds arise as the 
vanishing of the stress-energy tensor associated to the variational
problem of biharmonic submanifolds. More precisely, an isometric
immersion $f:M\to N$ between two Riemannian manifolds is
biconservative if the tangent component of its bitension field
is identically zero (see Section \ref{sec:basic}).

Simplest examples of biconservative hypersurfaces in space forms
are those that have constant mean curvature. In this case, the 
condition of biconservative becomes $2A(\grad H)+H\grad H=0$,
where $A$ is the shape operator and $H$ is the mean curvature
function of the hypersurface. The case of surfaces in $\R^3$ was
considered by Hasanis-Vlachos \cite{HV}, and surfaces in $\Sp^3$
and $\Hy^3$ was studied by Caddeo-Montaldo-Oniciuc-Piu \cite{CMOP}.
In the Euclidean space $\R^3$, these surfaces are rotational.
Recent results in the study of biconservative submanifolds were obtained, for example, in \cite{Fu1, Fu2, FT, MOR,
MOR2, Tu, UT}.

Apart from space forms, however, there are few Riemannian manifolds
for which biconservative submanifolds are classified. Recently, this
was considered for surfaces with parallel mean curvature vector field
in $\Sp^n\times\R$ and $\Hy^n\times\R$ in \cite{FOP}, where they
found explicit parametrizations for such submanifolds.

In this paper, we give a complete classification of biconservative
submanifolds in $\Qf$ with nonzero parallel mean curvature vector
field and co-dimension $2$. This extends the one obtained in 
\cite{FOP}. To state our result, let $\Q$ denote 
either the unit sphere $\Sp^n$ or the hyperbolic space 
$\Hy^n$, according as $\epsilon=1$ or $\epsilon=-1$, respectively. 
Given an isometric immersion $f:M^m\to\Qn$, let $\dt$ be a unit 
vector field tangent to the second factor. Then, a tangent vector 
field $T$ on $M^m$ and a normal vector field $\eta$ along $f$ are 
defined by
\begin{eqnarray}\label{eq:dt}
\dt=f_\ast T+\eta.
\end{eqnarray}
Consider now an oriented minimal surface $\phi:M^2\to\Qt^2_a\times\R$
such that the vector field $T$ defined by \eqref{eq:dt} is 
nowhere vanishing, where $a\neq 0$ and $|a|<1$. Let $b>0$ be a
real number such that $a^2+b^2=1$. Let now 
\[
f:M^3=M^2\times I\to\Qf
\]
be given by
\begin{eqnarray}\label{eq:localf}
f(p,s)=\left(b\cos\frac{s}{b},b\sin\frac{s}{b},\phi(p)\right).
\end{eqnarray}

\begin{theorem}\label{theo:main}
The map $f$ defines, at regular points, an isometric immersion with 
$\pl H,\eta\pr=0$, where $H$ is the mean curvature vector field of $f$.
Moreover, $f$ is a biconservative isometric immersion
with parallel mean curvature vector field if and only if $\phi$ is a vertical
cylinder. Conversely, any biconservative isometric immersion
$f:M^3\to\Qf$ with nonzero parallel mean curvature vector field,
such that the vector field $T$ defined by \eqref{eq:dt} is nowhere 
vanishing, is locally given in this way.
\end{theorem}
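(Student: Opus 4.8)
The plan is to isolate first the single scalar condition that governs biconservativity here, then dispatch the direct statement and the converse in turn.

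\emph{A preliminary reduction.} Before anything else I would record a computation valid for \emph{any} isometric immersion $g:M^3\to\Qf$ with parallel mean curvature vector field and $T$ nowhere zero. Writing $\bar X=X-\langle X,\dt\rangle\dt$ for the projection onto the $\Qt^4_\epsilon$-factor, the ambient curvature tensor is $R(X,Y)Z=\epsilon(\langle\bar Y,\bar Z\rangle\bar X-\langle\bar X,\bar Z\rangle\bar Y)$. Since $H$ is parallel and $|H|$ is constant, the tangent part of the bitension field collapses to $\bigl(\tr R(\cdot,H)\cdot\bigr)^\top$, and a direct expansion using $\langle H,\dt\rangle=\langle H,\eta\rangle$ shows this tangent part is a nonzero constant multiple of $\langle H,\eta\rangle\,g_\ast T$. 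Hence, for such immersions, biconservativity is equivalent to $\langle H,\eta\rangle=0$. This turns every later occurrence of ``biconservative with parallel $H$'' into the pair of conditions $\nabla^\perp H=0$ and $\langle H,\eta\rangle=0$.

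\emph{The direct statement.} For the map $f$ of \eqref{eq:localf} I would differentiate in the $M^2$- and $s$-directions to obtain a frame and read off that the induced metric is a Riemannian product (the $s$-curves are unit-speed and orthogonal to the image of $TM^2$), which also locates the regular points. Differentiating once more and projecting onto the normal bundle gives the second fundamental form; the key observation is that each $s$-curve is a circle of radius $b$ in a fixed $2$-plane of the $\R^5$-model of $\Qt^4_\epsilon$, so $\partial_s$ is a principal direction whose normal acceleration has no $\R$-component. Because $\partial_s f\perp\dt$, the fields $T,\eta$ of $f$ coincide with those of $\phi$, and $\langle H,\eta\rangle=0$ follows at once. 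Finally I would compute $\nabla^\perp H$ and show it vanishes precisely when $\phi$ is a vertical cylinder; granting that, the preliminary reduction makes $f$ automatically biconservative, which settles both implications.

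\emph{The converse.} Starting from a biconservative immersion $f:M^3\to\Qf$ with $\nabla^\perp H=0$, $H\neq0$ and $T\neq0$, the reduction gives $\langle H,\eta\rangle=0$. On the open set where $\eta\neq0$ the fields $e_4=H/|H|$ and $e_5=\eta/|\eta|$ form an orthonormal normal frame, and since $|H|$ is constant and $\langle H,\eta\rangle=0$ both are $\nabla^\perp$-parallel, so the normal bundle is flat. As $\langle H,\eta\rangle=0$ forces $\bar e_4=e_4\perp\bar X$ for every tangent $X$, the ambient term in the Ricci equation vanishes and we obtain $[A_{e_4},A_{e_5}]=0$, so the two shape operators diagonalize simultaneously; the same vanishing makes $A_H$ a Codazzi tensor. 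Feeding this into the structural identities $\nabla_XT=A_\eta X$ and $\grad|T|^2=2A_\eta T$, I would show that $T$ is a common principal direction, that the line field tangent to the circle direction is spanned by a parallel unit vector field (hence its integral curves are geodesics of $M$), and that its orthogonal complement is integrable with minimal leaves, yielding a local Riemannian product $M^3=M^2\times I$.

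\emph{Reconstruction and the main obstacle.} With the splitting in hand, each leaf lies in a totally umbilic $\Qt^2_a\times\R\subset\Qf$ and is minimal there, the radius $a$ being pinned down by $|T|$ and the constant $|H|$ with $b=\sqrt{1-a^2}$, while the $I$-factor is recovered as the circle $s\mapsto(b\cos\frac sb,b\sin\frac sb)$ from the principal curvature of $\partial_s$; assembling these via the fundamental theorem of submanifolds of a product space reproduces \eqref{eq:localf}. I expect the main difficulty to be exactly this splitting-and-integration step: proving integrability and (auto)parallelism of the two distributions, separately treating (or excluding) the locus $\eta=0$ where the normal frame degenerates, and checking that the commuting shape operators together with the $\dt$-relations really confine each leaf to a single totally umbilic slice of the correct radius. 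The curvature identities above supply the algebra, but organizing them into a clean de Rham–type decomposition and matching the constants $a,b$ to $|H|$ is the delicate part.
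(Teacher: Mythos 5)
Your overall architecture coincides with the paper's: the reduction of biconservativity (for parallel $H$) to $\pl H,\eta\pr=0$ is exactly \eqref{eq:biconprod}; your treatment of the direct statement mirrors the proof of Theorem \ref{theo:main1} (the frame $\xi_1,\xi_2$, the trace-free block form \eqref{eq:matrixi2} of $A_{\xi_2}$, and the chain ``$H$ parallel $\Leftrightarrow N_4=0\Leftrightarrow\phi$ vertical cylinder''); and your converse follows the same path as Section \ref{sec:main} (parallel orthonormal normal frame $H/\|H\|$, $\eta/\|\eta\|$, commuting shape operators via the Ricci equation, involutivity of $E_0(H)$, a local splitting, and reconstruction of the leaves and the circle factor).

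That said, the converse as written has a real gap, and you have located it yourself: the ``splitting-and-integration step'' is only announced, and it is where essentially all of the work lies. The paper carries it out by (a) computing the full Levi-Civita connection in the adapted frame (Lemma \ref{lem:LCivita}), which shows that $X_3$ is parallel and that the leaves of $E_0(H)$ are totally geodesic; (b) producing coordinates in which the ambient map decomposes as $\tilde f(u_1,u_2,s)=\Gamma_1(s)+\Gamma_2(u_1,u_2)$ in $\E^6$ (Lemma \ref{lem:system}); and (c) showing that two of the normal directions are constant along each leaf, so that the leaf lies in a fixed $4$-plane $\Pi_1$ containing $\dt$ and is a minimal surface of $\Pi_1\cap(\Qf)\cong\Qt^2_a\times\R$, while the $X_3$-curves are circles of radius $1/\sqrt{c^2+1}$ (Proposition \ref{PROPPP2}). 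Separately, your intermediate assertion that $T$ is a \emph{common} principal direction is not available at that stage: in the adapted frame of Lemma \ref{lem:orthframe}, $A_\eta$ restricted to $E_0(H)$ is a general symmetric block with off-diagonal entry $a_{12}$, and none of the identities you invoke ($\nabla_XT=A_\eta X$, $\grad\|T\|^2=2A_\eta T$, $[A_{\xi_1},A_{\xi_2}]=0$) kills $a_{12}$; the class-$\A$ property is obtained in the paper only a posteriori (Corollary \ref{cor:main1}), after $\phi$ has been identified as a vertical cylinder. Fortunately your splitting does not need that claim --- parallelism of $X_3$ and total geodesy of the $E_0(H)$-leaves suffice --- so it should be dropped rather than proved. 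A minor further imprecision: $\Qt^2_a\times\R$ is not totally umbilic in $\Qf$ (its second fundamental form vanishes in the $\R$-direction but not in the $\Qt^2_a$-directions); what is actually used is only that it is the intersection of $\Qf$ with an affine $4$-plane containing the $\R$-factor.
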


In particular, we prove (see Corollary \ref{cor:main1}) that the 
submanifolds of Theorem \ref{theo:main} belong to a special class,
which consists of isometric immersions $f:M^m\to\Qn$ with the
property that the vector field $T$ is an eigenvector of all shape 
operators of $f$.

\vspace{.2cm}

The paper is organized as follows. In Section \ref{sec:basic}, we
recall some properties of biharmonic maps and we give a more 
precise statement of biconservative submanifolds. The basics
of submanifols theory in product space is discussed in Section
\ref{sec:prod}. In particular, we recall with details the class $\A$.
In Section \ref{sec:codim2} we show some general results 
about $n$-dimensional biconservative submanifolds in $\Qn$. 
In particular, we obtain a necessary and sufficient condition for a 
biconservative submanifold with parallel mean curvature vector 
to be biharmonic. Finally, Section \ref{sec:main} contains the 
arguments necessary to prove the above main Theorem.

\section{Preliminaries}\label{sec:basic}

Given a smooth map $f:M\to N$ between two Riemannian manifolds,
the energy density of $f$ is the smooth function $e(f):M\to\R$
defined by
\[
e(f)=\frac{1}{2}\|\dd f\|^2,
\]
where $\|\dd f\|^2$ denotes de Hilbert-Schmidt norm of $\dd f$. The 
total energy of $f$, denoted by $E(f)$, is given by integrating the 
energy density over $M$,
\[
E(f)=\frac{1}{2}\int_M\|\dd f\|^2\dd M.
\]
The map $f$ is called {\em harmonic} if it is a critical point of the energy 
functional $E$. Equivalently, $f$ is harmonic if it satisfies the Euler-Lagrange
equation $\tau(f)=0$, where 
\[
\tau(f)=\tr(\nabla\dd f)
\]
is known as the tension field of map $f$. When $f:M^m\to N^n$ is an
isometric immersion with mean curvature vector field $H$, we have
$\tau(f)=mH$. Therefore the immersion $f$ is a harmonic map
if and only if $M$ is a minimal submanifold of $N$.

A natural generalization of harmonic maps are the {\em biharmonic}
maps, which are critical points of the bienergy functional
\[
E_2(f)=\frac{1}{2}\int_M\|\tau(f)\|^2\dd M.
\]
This generalization, initially suggested by Eells-Sampson \cite{ES},
was studied by Jiang \cite{Ji2}, where he derived the corresponding
Euler-Lagrange equation
\[
\tau_2(f) = J(\tau(f)) = 0,
\]
where $J(\tau(f))=\Delta\tau(f)-\tr\tilde R(\dd f,\tau(f))\dd f$ is the 
Jacobi operator of $f$.

When $f:M^n\to N^n$ is an isometric immersion, we get
\[
\tau_2(f)=m(\Delta H-\tilde R(\dd f,H)\dd f).
\]
Thus a minimal isometric immersion in the Euclidean space is trivially
biharmonic. Concerning biharmonic submanifolds in the Euclidean space,
one of the main problem is the following known Chen's conjecture 
\cite{Ch1}: {\em Any biharmonic submanifold in the Euclidean space 
is minimal}.

\vspace{.2cm}

The {\em stress-energy} tensor, described by Hilbert \cite{Hi}, is a 
symmetric $2$-covariant tensor $S$ associated to a variational 
problem that is conservative at the critical points. Such tensor was 
employed by Baird-Eells \cite{BE} in the study of harmonic maps.
In this context, it is given by
\[
S=\frac{1}{2}\|\dd f\|^2\pl,\pr_M-f^\ast\pl,\pr_N,
\]
and it satisfies 
\[
\di S=-\pl\tau(f),\dd f\pr.
\]
Therefore, $\di S=0$ when $f$ is harmonic. 

\vspace{.2cm}

In the context of biharmonic
maps, Jiang \cite{Ji} obtained the stress-energy tensor $S_2$
given by
\begin{eqnarray*}
S_2(X,Y) &=& \frac{1}{2}\|\tau(f)\|^2\pl X,Y\pr + 
\pl\dd f,\nabla\tau(f)\pr\pl X,Y\pr \\
&& -\pl X(f),\nabla_Y\tau(f)\pr - \pl Y(f),\nabla_X\tau(f)\pr,
\end{eqnarray*}
which satisfies
\[
\di S_2=-\pl\tau_2(f),\dd f\pr.
\]
In the case of $f:M^m\to N^n$ to be an isometric immersion, it follows
that $\di S=0$, since $\tau(f)$ is normal to $f$. However, we have
\[
\di S_2=-\tau_2(f)^T,
\]
and thus $\di S_2$ does not always vanish.

\begin{definition}
{\em 
An isometric immersion $f:M^m\to N^n$ is called {\em biconservative} if its
stress-energy tensor $S_2$ is conservative, i.e., $\tau_2(f)^T=0$.}
\end{definition}

The following splitting result of the bitension field, with respect to
its normal and tangent components, is well known (see, for example
\cite{FOP, MOR, MOR2}).

\begin{proposition}\label{prop:splitting}
Let $f:M^m\to N^n$ be an isometric immersion between two Riemannian
manifolds. Then $f$ is biharmonic if and only if the tangent and normal
components of $\tau_2(f)$ vanish, i.e.,
\begin{eqnarray}\label{eq:propbihar1}
m\grad\|H\|^2 + 4\tr A_{\nabla^\perp_{(\cdot)}H}(\cdot) + 
4\tr\big(\tilde R(\cdot,H)\cdot\big)^T=0
\end{eqnarray}
and
\begin{eqnarray}\label{eq:propbihar2}
\tr\alpha_f(A_H(\cdot),H) - \Delta^\perp H + 
2\tr\big(\tilde R(\cdot,H)\cdot\big)^\perp=0,
\end{eqnarray}
where $\tilde R$ denotes the curvature tensor of $N$.
\end{proposition}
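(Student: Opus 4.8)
The plan is to start from the expression for the bitension field of an isometric immersion recalled in the Preliminaries, namely $\tau_2(f)=m\big(\Delta H-\tr\tilde R(\dd f,H)\dd f\big)$, obtained from $\tau(f)=mH$, and to decompose it into its components tangent and normal to $f$. Since these two components live in orthogonally complementary subbundles, the equation $\tau_2(f)=0$ holds if and only if both components vanish separately; this is precisely the claimed equivalence. Hence the entire proof reduces to rewriting $\tau_2(f)^T$ and $\tau_2(f)^\perp$ in the intrinsic quantities $H$, $A$, $\alpha_f$, $\nabla^\perp$ and $\tilde R$, and identifying them with the left-hand sides of \eqref{eq:propbihar1} and \eqref{eq:propbihar2}.

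The computational core is the Gauss--Weingarten expansion of the connection Laplacian $\Delta H$ of the normal section $H$ along $f$. I would fix $p\in M$ and an orthonormal frame $\{e_i\}$ that is geodesic at $p$, so that $\Delta H=\sum_i\tilde\nabla_{e_i}\tilde\nabla_{e_i}H$ at $p$. The Weingarten formula gives $\tilde\nabla_{e_i}H=-A_H e_i+\nabla^\perp_{e_i}H$; differentiating once more and splitting the tangential summand $-A_H e_i$ by the Gauss formula and the normal summand $\nabla^\perp_{e_i}H$ by the Weingarten formula yields
\[
\Delta H=-\sum_i\nabla_{e_i}(A_H e_i)-\tr\alpha_f(A_H(\cdot),\cdot)-\tr A_{\nabla^\perp_{(\cdot)}H}(\cdot)+\Delta^\perp H.
\]
Taking the normal component of this identity, adding the normal part $\big(\tr\tilde R(\cdot,H)\cdot\big)^\perp$ of the curvature term, and using $\tau_2(f)^\perp=0$ directly produces the normal equation \eqref{eq:propbihar2}.

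The tangential component is the delicate step, and it is where I expect the real work to lie. Collecting tangent parts gives $\tau_2(f)^T$ in terms of the divergence-type expression $\sum_i\nabla_{e_i}(A_H e_i)=\tr(\nabla_{(\cdot)}A_H)(\cdot)$, together with $\tr A_{\nabla^\perp_{(\cdot)}H}(\cdot)$ and the tangential curvature term. To reach \eqref{eq:propbihar1} I would rewrite $\tr(\nabla_{(\cdot)}A_H)(\cdot)$ via the Codazzi equation: tracing Codazzi, and using $\langle A_H X,Y\rangle=\langle\alpha_f(X,Y),H\rangle$ together with $mH=\tr\alpha_f$, converts this divergence into a combination of $\grad\|H\|^2$, of $\tr A_{\nabla^\perp_{(\cdot)}H}(\cdot)$, and of $\big(\tr\tilde R(\cdot,H)\cdot\big)^T$. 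Substituting this back and clearing the common factor then gives \eqref{eq:propbihar1} with the stated coefficients. The main obstacle is exactly this Codazzi manipulation and the accompanying bookkeeping of numerical factors and signs (which depend on the conventions for the rough Laplacian and for $\tilde R$ adopted in the Preliminaries); by contrast, the orthogonal splitting and the passage from the simultaneous vanishing of the two components to biharmonicity are immediate once $\Delta H$ has been put in Gauss--Weingarten form.
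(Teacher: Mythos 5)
Your argument is correct and is precisely the standard proof of this splitting; note that the paper itself offers no proof (it declares the proposition ``well known'' and defers to \cite{FOP, MOR, MOR2}), and your route --- Gauss--Weingarten expansion of $\Delta H$ plus the traced Codazzi equation --- is exactly the computation carried out in those sources. Your displayed identity for $\Delta H$ is right, and tracing Codazzi, using $\sum_i(\nabla^\perp_X\alpha)(e_i,e_i)=m\nabla^\perp_XH$ and the curvature symmetry $\pl\tilde R(e_i,X)e_i,H\pr=\pl\tilde R(e_i,H)e_i,X\pr$, gives
\[
\sum_i\nabla_{e_i}(A_He_i)=\frac{m}{2}\grad\|H\|^2+\tr A_{\nabla^\perp_{(\cdot)}H}(\cdot)+\tr\big(\tilde R(\cdot,H)\cdot\big)^T,
\]
which is what doubles the last two terms in the tangential component and yields \eqref{eq:propbihar1} with the coefficients $(m,4,4)$ exactly.

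The one place where your write-up overclaims is the normal part: it does not ``directly produce'' \eqref{eq:propbihar2} as printed. Your expansion gives $\tr\alpha_f(A_H(\cdot),\cdot)-\Delta^\perp H+\tr\big(\tilde R(\cdot,H)\cdot\big)^\perp=0$, i.e.\ coefficient $1$ on the curvature term and $\cdot$ (not $H$) in the second slot of $\alpha_f$. You hedge residual factors as convention-dependent, but that escape is not available here: the ambient curvature enters $\tau_2(f)$ exactly once, and your Codazzi step feeds curvature only into the tangential component, so no choice of sign convention for $\Delta$ or $\tilde R$ can change the relative weight of $\tr\big(\tilde R(\cdot,H)\cdot\big)^\perp$ against $\Delta^\perp H$ from $1$ to $2$. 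The ``$2$'' and the stray ``$H$'' in \eqref{eq:propbihar2} are typos in the paper's statement, as its own later use confirms: Proposition \ref{PropBihEqForCodim2} deduces \eqref{BihEqForCodim2}, $\tr A_{\xi_1}^2+\|T\|^2=n$, which follows from the coefficient-$1$ equation, whereas coefficient $2$ would instead give $\tr A_{\xi_1}^2+2\|T\|^2=2n$; independently, the classical biharmonic hypersphere $\Sp^m(1/\sqrt{2})\subset\Sp^{m+1}$, with $\|A\|^2=m$ and constant $H$, satisfies the coefficient-$1$ normal equation but not the printed one. So your proof is sound; you should simply state that it establishes the corrected normal equation, rather than attributing the mismatch with \eqref{eq:propbihar2} to bookkeeping of conventions.
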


\section{Basic facts about submanifolds in $\Qn$}\label{sec:prod}

In order to study submanifolds $f:M^m\to\Qn$, our approach is to 
regard $f$ as an isometric immersion into $\E^{n+2}$, where 
$\E^{n+2}$ denote either Euclidean space or Lorentzian space
$(n+2)$-dimensional, according as $\epsilon=1$ or $\epsilon=-1$, 
respectively. Then we consider the canonical inclusion
\[
i:\Qn\to\E^{n+2}
\]
and study the composition $\tf=i\circ f$. Notice that the vector field 
$T$ is the gradient of the height function $h=\pl\tilde f,i_\ast\dt\pr$.

Using that $\dt$ is a parallel vector field in $\Qn$ we obtain, by 
differentiating \eqref{eq:dt}, that
\begin{eqnarray}\label{eq:vectorT}
\nabla_XT=A_\eta X
\end{eqnarray}
and 
\begin{eqnarray}\label{eq:vectorEta}
\alpha_f(X,T)=-\nabla^\perp_X\eta,
\end{eqnarray}
for all $X\in TM$, where $\alpha_f$ denotes the second fundamental
form of $f$ and $A_\xi$ stands for the shape operator of $f$ with respect
to $\xi\in TM^\perp$, given by
\[
\pl A_\xi X,Y\pr=\pl\alpha_f(X,Y),\eta\pr \
\text{for all} \ X,Y\in TM.
\]

The Gauss, Codazzi and Ricci equations for $f$ are, respectively 
\begin{eqnarray}
\begin{aligned}
R(X,Y)Z &= A_{\alpha(Y,Z)}X-A_{\alpha(X,Z)}Y+\epsilon\big(X\wedge Y \\
&+ \pl X,T\pr Y\wedge T- \pl Y,T\pr X\wedge T\big)Z,
\end{aligned}
\end{eqnarray}
\begin{eqnarray}\label{eq:codazzi}
\left(\nabla^\perp_X\alpha\right)(Y,Z)-\left(\nabla^\perp_Y\alpha\right)(X,Z)
=\epsilon\pl(X\wedge Y)T,Z\pr\eta
\end{eqnarray}
and
\begin{eqnarray}
R^\perp(X,Y)\xi=\alpha(X,A_\xi Y)-\alpha(A_\xi X,Y),
\end{eqnarray}
for all $X,Y,Z\in TM$ and $\xi\in TM^\perp$ (cf. \cite{LTV} for more details).

\vspace{.2cm}

In the case of hypersurfaces $f:M^n\to\Qn$, the vector field $\eta$
given in \eqref{eq:dt} can be written as 
\begin{eqnarray}\label{eq:etahyp}
\eta=\nu N,
\end{eqnarray}
where $N$ is a unit normal vector field along $f$ and $\nu$ is a 
smooth function on $M$. Thus the equations \eqref{eq:vectorT} and \eqref{eq:vectorEta} become
\[
\nabla_XT=\nu AX \quad\text{and}\quad X(\nu)=-\pl AX,T\pr,
\]
for all $X\in TM$, where $A$ stands for the shape operator of $f$ with
respect to $N$.

\subsection{The class $\A$}\label{sec:classA}

We will denote by $\A$ the class of isometric immersions $f:M^m\to\Qn$
with the property that $T$ is an eigenvector of all shape operators of $f$.
This class was introduced in \cite{To}, where a complete description was
given for hypersurfaces, and extended to submanifolds of $\Qn$ in \cite{MT}.
Trivial examples are the slices $\Q\times\{t\}$, corresponding to the case
in which $T$ vanishes identically, and the vertical cylinders $N^{m-1}\times\R$, where $N^{m-1}$ is a submanifold of $\Q$, which correspond to the case
in which the normal vector field $\eta$ vanishes identically.

Following the notation of \cite{MT}, let us recall a way of construct 
more examples of submanifolds in this class. Let $g:N^{m-1}\to\Q$
be an isometric immersion and suppose that there exists an orthonormal
set of parallel normal vector fields $\xi_1,\ldots,\xi_k$ along $g$. Thus
the vector subbundle $E$ with rank $k$ of $TN^\perp$, spanned by
$\xi_1,\ldots,\xi_k$, is parallel and flat. Let us denote by $j:\Q\to\Qn$ and
$i:\Qn\to\E^{n+2}$ the canonical inclusions, and let $l=i\circ j$. Set
\[
\tilde\xi_0=l\circ g, \quad \tilde\xi_i=l_\ast\xi_i, \ 1\leq i\leq k, 
\quad\text{and}\quad \tilde\xi_{k+1}=i_\ast\dt.
\]
Then the vector subbundle $\tilde E$ of $TN^\perp_{\tilde g}$, where
$\tilde g=l\circ g$, spanned by $\tilde\xi_0,\ldots,\tilde\xi_{k+1}$, is
parallel and flat, and we can define a vector bundle isometry
\[
\phi:N^{m-1}\times\E^{k+2}\to\tilde E
\]
by
\[
\phi(x,y)=\sum_{i=0}^{k+1}y_i\tilde\xi_i(x),
\]
for all $x\in N^{m-1}$ and for all $y=(y_0,\ldots,y_{k+1})\in\E^{k+2}$.
Using this isometry, we define a map $f:N^{m-1}\times I\to\Qn$
by
\begin{eqnarray}\label{eq:mapf}
\tilde f(x,t)=(i\circ f)(x,t)=\phi(x,\alpha(t)),
\end{eqnarray}
where $\alpha:I\to\mathbb Q^k\times\R$ is a regular curve 
with $\sum_{i=0}^k\alpha_i^2=1$ and $\alpha_{k+1}'\neq0$.

The main result concerning the map $f$ given in \eqref{eq:mapf} is
that, at regular points, $f$ is an isometric immersion in class $\A$.
Conversely, given any isometric immersion $f:M^m\to\Qn$
in class $\A$, with $m\geq2$, $f$ is locally given in this way 
(cf. \cite[Theorem 2]{MT}). The map $\tf$ is a partial tube over
$\tilde g$ with type fiber $\alpha$ in the sense of \cite{CW}.
Geometrically, the submanifold $M^m=N^{m-1}\times I$ is obtained
by the parallel transport of $\alpha$ in a product submanifold 
$\mathbb Q^k\times\R$ of a fixed normal space of $\tilde g$ with respect
to its normal connection.

We point out that, in the case of hypersurfaces, $f$ is in class $\A$
if and only if the vector field $T$ in \eqref{eq:dt} is nowhere vanishing
and $\tf$ has flat normal bundle (cf. \cite[Proposition 4]{To}).
Some important classes of hypersurfaces of $\Qn$ that
are included in class $\A$ are hypersurfaces with constant sectional
curvature \cite{MaT}, rotational hypersurfaces \cite{DFV} and constant
angle hypersurfaces \cite{To}. For submanifolds of higher codimension,
we have that $f$ is in class $\A$ and it has flat normal bundle if and
only if the vector field $T$ in \eqref{eq:dt} is nowhere vanishing and
$\tf$ has flat normal bundle \cite[Corollary 3]{MT}.

\section{Biconservative submanifolds in $\Qn$}\label{sec:codim2}

Let $f:M^m\to\Qn$ be an isometric immersion with nonzero parallel 
mean curvature vector field $H$. It follows from \eqref{eq:propbihar1}
and from the expression of the curvature tensor of $\Qn$ that $f$ is 
biconservative if and only if
\begin{eqnarray}\label{eq:biconprod}
\epsilon\pl H,\eta\pr T=0,
\end{eqnarray}
where $\eta$ and $T$ denote the vector fields given in \eqref{eq:dt}. 
Without loss of generality, we may assume that $T$ and $\eta$ are 
nowhere vanishing. Therefore, it follows from \eqref{eq:biconprod} 
that $H$ is orthogonal to $\dt$ and, thus
\begin{eqnarray}\label{eq:biconprod2}
X\pl H,\dt\pr=0,
\end{eqnarray}
for all $X\in TM$. As $\nabla^\perp_XH=0$ and $\tilde\nabla_X\dt=0$,
it follows from \eqref{eq:biconprod2} that
\[
\pl A_HT,X\pr=0,
\]
for all $X\in TM$, which implies that 
\begin{eqnarray}\label{eq:biconprod3}
A_HT=0.
\end{eqnarray}

On the other hand, since $H$ is parallel it follows from the Ricci equation
that $[A_H,A_\xi]=0$ for every $\xi\in TM^\perp_f$. In particular, we have
$[A_H,A_\eta]=0$. Equivalently, the eigenspaces associated to $A_H$ 
are invariant by $A_\eta$. In particular, if we denote by
\begin{eqnarray}\label{eq:biconprod4}
E_0(H)=\{X\in TM:A_HX=0\},
\end{eqnarray}
we conclude that 
\begin{eqnarray}\label{eq:biconprod5}
A_\eta T\in E_0(H).
\end{eqnarray}

\begin{remark}\label{rem:codim1}
{\em
In the case of biconservative hypersurfaces with nonzero parallel
mean curvature vector, the equation \eqref{eq:biconprod} can be 
written as
\[
h\nu T=0,
\]
where $\nu$ is the function given in \eqref{eq:etahyp} and $h$ is
the smooth function such that $H=hN$. Thus, as $h\neq0$, a 
biconservative hypersurface $f:M^n\to\Qn$ with nonzero parallel
mean curvature vector is either a slice $\Q\times\{t\}$ or an open
subset of a Riemannian product $N^{n-1}\times\R$, where $N^{n-1}$
is a hypersurface of $\Q$ with nonzero parallel mean curvature 
vector field.}
\end{remark}

Thus, by virtue of Remark \ref{rem:codim1}, we will consider 
biconservative submanifolds with codimension greater than one.

\subsection{Biconservative submanifolds of co-dimension $2$}

Let us consider now the case of co-dimension $2$, that is, a biconservative
isometric immersion $f:M^n\to\Qm$ with nonzero parallel mean curvature
vector field $H$. Let us consider the unit normal vector fields
\begin{eqnarray}\label{eq:xi1xi2}
\xi_1=H/\|H\| \quad\text{and}\quad \xi_2=\eta/\|\eta\|.
\end{eqnarray}
It follows from \eqref{eq:biconprod} that $\{\xi_1,\xi_2\}$ is an orthonormal
normal frame of $f$. Moreover, as $\nabla^\perp\xi_1=0$ and $f$ has
co-dimension $2$, we also have $\nabla^\perp\xi_2=0$.

Suppose first that the eigenspace $E_0(H)$ given in \eqref{eq:biconprod4}
is one-dimensional, that is, $E_0(H)=\spa\{T\}$. This implies that
$A_\eta T=\lambda T$ for some smooth function $\lambda$. Thus,
it follows from \eqref{eq:vectorEta} that
\[
-\nabla^\perp_X\eta = \frac{\lambda}{\|\eta\|^2}\pl T,X\pr\eta.
\]
In particular, we have $\nabla^\perp_X\eta=0$ for every $X\in\{T\}^\perp$
and, from \cite[Proposition 10]{MT}, we conclude that $f$ is in class $\A$.

\begin{remark}
If $E_0(H)$ is $n$-dimensional one has $A_{\xi_1}$ identically zero.
This implies that the mean curvature vector field $H$ of $f$ is a 
multiple of $\xi_2$, and this contradicts the fact that $H$ and 
$\eta$ are orthogonal, unless that $\eta$ is identically zero.
\end{remark}

From now on, let us assume that $\dim E_0(H)=k$, with $1<k<n$.

\begin{lemma}\label{lem:orthframe}
Let $f:M^n\to\Qm$ be a biconservative isometric immersion with
nonzero parallel mean curvature vector field. Then there exists a local
orthonormal frame $X_1,\ldots,X_n$ in $M^n$, with $X_1=T/\|T\|$,
such that:
\begin{itemize}
\item[(i)] The shape operators of $f$ with respect to $\xi_1$ and $\xi_2$,
given in \eqref{eq:xi1xi2}, have matrix representations given by
\begin{equation}\label{ShapeOpFORMS1}
A_{\xi_1}=
\left(\begin{array}{cc}
0 & 0 \\
0 & S_1
\end{array}\right)
\quad\mbox{ and }\quad
A_{\xi_2}=
\left(\begin{array}{cc}
S_2 & 0 \\
0 & B
\end{array}
\right),
\end{equation}
where $S_1$ and $B$ are diagonalized matrices and $S_2$ is a 
symmetric matrix such that $\tr S_1=const\neq0$, $\tr S_2+\tr B=0$
and
\begin{eqnarray}\label{eq:ShOpFo2}
\tr BS_1=0.
\end{eqnarray}
\item[(ii)] $\nabla_{X_i}X_j\in E_0(H)$, for all $1\leq i,j\leq k$.
\end{itemize}
\end{lemma}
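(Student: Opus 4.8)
The plan is to build the adapted frame from the two commuting shape operators and then read off the trace identities from the Codazzi equations. First I would record the facts already available: by \eqref{eq:biconprod3} we have $A_{\xi_1}T=0$, so $T\in E_0(H)=\ker A_{\xi_1}$; since $H$ is parallel the Ricci equation gives $[A_{\xi_1},A_{\xi_2}]=0$; and because $\dim E_0(H)=k$ is constant, $E_0(H)$ is a smooth distribution of rank $k$ and $A_{\xi_2}$ leaves both $E_0(H)$ and $E_0(H)^\perp$ invariant. On $E_0(H)^\perp$ the operators $A_{\xi_1}$ and $A_{\xi_2}$ are commuting symmetric operators, hence simultaneously diagonalizable; choosing a local orthonormal frame $X_{k+1},\dots,X_n$ of common eigenvectors makes $S_1=A_{\xi_1}|_{E_0(H)^\perp}$ and $B=A_{\xi_2}|_{E_0(H)^\perp}$ diagonal. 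On $E_0(H)$ I would take any orthonormal frame $X_1,\dots,X_k$ with $X_1=T/\|T\|$ (possible as $T\in E_0(H)$), so that $A_{\xi_1}$ vanishes there while $A_{\xi_2}|_{E_0(H)}=S_2$ is merely symmetric. Since both operators preserve the splitting, the off-diagonal blocks vanish and \eqref{ShapeOpFORMS1} follows.

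Next I would verify the trace relations. As $A_{\xi_1}=A_H/\|H\|$ and $\tr A_H=\langle\tr\alpha_f,H\rangle=n\|H\|^2$, one gets $\tr S_1=\tr A_{\xi_1}=n\|H\|$, which is a nonzero constant because $H$ is parallel. Similarly $\tr S_2+\tr B=\tr A_{\xi_2}=n\langle H,\eta\rangle/\|\eta\|=0$ by \eqref{eq:biconprod}. The identity \eqref{eq:ShOpFo2} is the substantive point. Projecting the Codazzi equation \eqref{eq:codazzi} onto $\xi_1$ and using $\nabla^\perp\xi_1=0$ together with $\langle\eta,\xi_1\rangle=0$ shows that $A_{\xi_1}$ is a Codazzi tensor, i.e. $(\nabla_XA_{\xi_1})Y=(\nabla_YA_{\xi_1})X$. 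Contracting this with $T$, I would compute $\sum_i\langle(\nabla_{X_i}A_{\xi_1})T,X_i\rangle$ in two ways: by the Codazzi property it equals $\sum_i\langle(\nabla_TA_{\xi_1})X_i,X_i\rangle=T(\tr A_{\xi_1})=0$, while expanding the covariant derivative and using $A_{\xi_1}T=0$ together with \eqref{eq:vectorT} in the form $\nabla_XT=\|\eta\|A_{\xi_2}X$ gives $-\|\eta\|\tr(A_{\xi_1}A_{\xi_2})$. Since the operators preserve the splitting, $\tr(A_{\xi_1}A_{\xi_2})=\tr(BS_1)$, and as $\|\eta\|\neq0$ we conclude $\tr BS_1=0$.

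Finally, for (ii) I would exploit again that $A_{\xi_1}$ is a Codazzi tensor having the eigenvalue $0$ of constant multiplicity $k$ on $E_0(H)$. For $X,Y\in E_0(H)$ one has $(\nabla_XA_{\xi_1})Y=-A_{\xi_1}\nabla_XY$, and for a common eigenvector $W\in E_0(H)^\perp$ with $A_{\xi_1}W=\mu W$, $\mu\neq0$, the symmetry of $\nabla A_{\xi_1}$ and the Codazzi identity give $\langle(\nabla_XA_{\xi_1})Y,W\rangle=\langle(\nabla_WA_{\xi_1})X,Y\rangle=-\langle A_{\xi_1}\nabla_WX,Y\rangle=0$, the last equality because $A_{\xi_1}Y=0$. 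Comparing with $\langle(\nabla_XA_{\xi_1})Y,W\rangle=-\mu\langle\nabla_XY,W\rangle$ and using $\mu\neq0$ yields $\langle\nabla_XY,W\rangle=0$; as $W$ ranges over a basis of $E_0(H)^\perp$, this shows $\nabla_{X_i}X_j\in E_0(H)$ for all $1\le i,j\le k$.

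The main obstacle is the identity \eqref{eq:ShOpFo2}: it does not follow from linear algebra alone but requires feeding the product-space Codazzi equation — whose correction term conveniently drops out in the $\xi_1$ direction since $\eta\perp\xi_1$ — into a trace contraction, after which the relation $\nabla_XT=\|\eta\|A_{\xi_2}X$ converts it into the desired trace of the product of the shape operators. A secondary technical point is the smooth simultaneous diagonalization on $E_0(H)^\perp$, which one carries out on the open dense subset where the common eigenvalue multiplicities are locally constant.
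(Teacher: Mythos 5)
Your proof is correct, and its overall skeleton (block structure from the commuting shape operators, trace identities, Codazzi for the involutivity) matches the paper's; the genuinely different step is your derivation of the key identity \eqref{eq:ShOpFo2}. The paper works eigenvalue by eigenvalue: it writes $\partial_t=\cos\theta\,X_1+\sin\theta\,\xi_2$, extracts from the Codazzi equation the relation $X_1(\lambda_i)+\pl\nabla_{X_i}X_1,X_i\pr\lambda_i=0$, uses the parallelism of $\partial_t$ to replace $\pl\nabla_{X_i}X_1,X_i\pr$ by $\tan\theta\,\mu_i$, and then sums the resulting ODEs $X_1(\lambda_i)=\tan\theta\,\lambda_i\mu_i$ against the constancy of $\tr S_1$. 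You instead observe once and for all that $A_{\xi_1}$ is a Codazzi tensor (the correction term $\epsilon\pl(X\wedge Y)T,Z\pr\eta$ dies against $\xi_1$ because $\pl\eta,\xi_1\pr=0$) and contract $\sum_i\pl(\nabla_{X_i}A_{\xi_1})T,X_i\pr$ two ways, feeding in $A_{\xi_1}T=0$ and $\nabla_XT=A_\eta X=\|\eta\|A_{\xi_2}X$ to land directly on $\|\eta\|\tr(A_{\xi_1}A_{\xi_2})=0$. The two computations use the same two inputs (Codazzi plus the parallelism of $\partial_t$, the latter being exactly what encodes $\nabla T=A_\eta$), and both need the same nondegeneracy ($\eta$ nowhere vanishing, i.e.\ $\tan\theta\neq0$ versus $\|\eta\|\neq0$); your version is coordinate-free and leaner, while the paper's yields the pointwise relations $X_1(\lambda_i)=\tan\theta\,\lambda_i\mu_i$ as a by-product. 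Your argument for (ii) is likewise the standard Codazzi-tensor kernel argument and is equivalent to the paper's componentwise use of $\pl\tilde R(X_i,X_j)X_l,\xi_1\pr=0$ together with $\alpha_f(X_i,X_j)=0$; in fact you obtain the slightly stronger statement that $E_0(H)$ is autoparallel, which is exactly what assertion (ii) records.
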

\begin{proof}
Writing $X_1=T/\|T\|$, consider the local orthonormal frame 
$X_1,X_2,\ldots,X_n$ in $M^n$, where $X_2,\ldots,X_n$ are
eigenvectors of $A_{\xi_1}$ such that 
\[
E_0(H)=\spa\{X_1,X_2,\ldots,X_k\}.
\]
Thus we have the first equation of \eqref{ShapeOpFORMS1}. Moreover,
since $\xi_1$ is proportional to $H$ and $H$ has constant length, we have
$\tr A_{\xi_1}=\tr S_1=const\neq0$ and $\tr A_{\xi_2}=0$. On the other 
hand, by a simple computation, one can see that the Ricci equation 
$R^\perp(X_i,X_j)\xi_1=0$ takes the form
\begin{equation}\label{RicciEqGeneral}
\alpha_f(X_i,A_{\xi_1}X_j)-\alpha_f(A_{\xi_1}X_i,X_j)=0.
\end{equation}
For $1\leq i\leq k$ and $k+1\leq j\leq n$, the equation \eqref{RicciEqGeneral}
gives
\begin{eqnarray}\label{RicciEqGeneral2}
\alpha_f(X_i,X_j)=0.
\end{eqnarray}
Therefore, the matrix representation of $A_{\xi_2}$ takes the form given
in the second equation of \eqref{ShapeOpFORMS1}. Moreover, 
for $k+1\leq i\neq j\leq n$, \eqref{RicciEqGeneral} becomes
\[
(\lambda_i-\lambda_j)\alpha_f(X_i,X_j)=0.
\]
Now, if the distribution 
\[
\Gamma_i=\{X\in TM: A_{\xi_1}X=\lambda_iX\}
\]
has dimension $m_i>1$, then we have 
$A_{\xi_1}\vert_{\Gamma_i}=\lambda_i\Id$ and, by replacing indices 
if necessary, we may assume that
$\Gamma_i=\spa\{X_i,X_{i+1},\ldots,X_{i+m_i-1}\}$. Therefore, by redefining
$X_i,X_{i+1},\ldots,X_{i+m_i-1}$ properly, we may diagonalize 
$A_{\xi_2}\vert_{\Gamma_1}$. Since $A_{\xi_1}\vert_{\Gamma_1}$ is 
proportional to identity matrix it, no matter, remains diagonalized. Summing
up, we see that, by redefining $X_{k+1},X_{k+2},\ldots,X_n$ properly, 
one can diagonalize the matrix $B$. Then, we can write
\[
S_1=\diag(\lambda_{k+1},\lambda_{k+2},\ldots,\lambda_n)
\quad\text{and}\quad
B=\diag(\mu_{k+1},\mu_{k+2},\ldots,\mu_n)
\]
for some smooth functions $\lambda_i,\mu_i$, with $k+1\leq i\leq n$. In
order to obtain \eqref{eq:ShOpFo2}, we need to show that
\begin{eqnarray}\label{eq:ShOpFo3}
\sum_{i=k+1}^n\lambda_i\mu_i=0.
\end{eqnarray}
By a direct computation, it follows from the Codazzi equation that
\begin{eqnarray}\label{eq:Codlemma}
X_1(\lambda_i)+\pl\nabla_{X_i}X_1,X_i\pr\lambda_i=0,
\end{eqnarray}
for $k+1\leq i\leq n$. On the other hand, from \eqref{eq:dt} we have
\begin{eqnarray}\label{eq:dt2}
\dt=\cos\theta X_1+\sin\theta\xi_2
\end{eqnarray}
for a smooth function $\theta\neq\frac{\pi}{2}$. Since $\dt$ is parallel,
equation \eqref{eq:dt2} yields 
\begin{eqnarray}\label{eq:dt3}
0=\cos\theta\pl\nabla_{X_i}X_1,X_i\pr - \sin\theta\pl A_{\xi_2}X_i,X_i\pr.
\end{eqnarray}
Combining \eqref{eq:Codlemma} and \eqref{eq:dt3}, we get
\[
X_1(\lambda_i)=\tan\theta\lambda_i\mu_i.
\]
By summing this equation on $i$ and taking into account 
\[
\tr S_1=\sum_{i=k+1}^n\lambda_i=const,
\]
we get \eqref{eq:ShOpFo3}, which proves the assertion in (i).
Finally, for $1\leq i,l\leq k$ and $k+1\leq j\leq n$, we obtain from
Codazzi equation that
\[
\pl\tilde R(X_i,X_j)X_l,\xi_1\pr=0.
\]
Then, using \eqref{RicciEqGeneral2}, we obtain 
\[
\pl\nabla_{X_i}X_l,X_j\pr=0,
\]
for all $1\leq i,l\leq k$ and $k+1\leq j\leq n$, and this proves (ii).
\end{proof}

\begin{corollary}\label{cor:inv}
Let $f:M^n\to\Qm$ be a biconservative isometric immersion with
nonzero parallel mean curvature vector field. Then $E_0(H)$ is
an involutive distribution.
\end{corollary}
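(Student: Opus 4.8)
The plan is to deduce involutivity directly from the structural information already packaged in Lemma \ref{lem:orthframe}. Since involutivity is a local condition, and a distribution of constant rank is involutive precisely when the Lie brackets of the vector fields of any local frame spanning it remain in the distribution, it suffices to work with the frame $X_1,\ldots,X_n$ furnished by the lemma, for which $E_0(H)=\spa\{X_1,\ldots,X_k\}$.

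First I would recall that the Levi-Civita connection $\nabla$ of $M^n$ is torsion-free, so for any two indices $1\leq i,j\leq k$ one has
\[
[X_i,X_j]=\nabla_{X_i}X_j-\nabla_{X_j}X_i.
\]
The content of part (ii) of Lemma \ref{lem:orthframe} is exactly that each term on the right-hand side is a section of $E_0(H)$; hence so is their difference. This establishes $[X_i,X_j]\in E_0(H)$ for all $1\leq i,j\leq k$, which is the frame-level criterion for involutivity.

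To finish, I would extend this from frame vector fields to arbitrary sections $X=\sum_i a_iX_i$ and $Y=\sum_j b_jX_j$ of $E_0(H)$: expanding $[X,Y]$ by the Leibniz rule produces, besides the brackets $[X_i,X_j]$ already handled, only terms of the form $(Xa_j)X_j$ and $(Yb_i)X_i$, each manifestly a section of $E_0(H)$. Collecting the contributions yields $[X,Y]\in E_0(H)$, as required.

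I expect essentially no obstacle at this stage: the substantive work --- diagonalizing the shape operators and, crucially, proving $\nabla_{X_i}X_j\in E_0(H)$ for $1\leq i,j\leq k$ via the Codazzi equation together with \eqref{RicciEqGeneral2} --- was already carried out in the proof of Lemma \ref{lem:orthframe}. The corollary is thus a direct reading of part (ii) of that lemma, and the only thing to verify carefully is that the frame criterion genuinely passes to arbitrary sections, which the Leibniz expansion above confirms.
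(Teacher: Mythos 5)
Your proof is correct and follows essentially the same route as the paper: both deduce involutivity directly from part (ii) of Lemma \ref{lem:orthframe} via torsion-freeness of the connection, with your version merely spelling out the passage from frame fields to arbitrary sections. The only omission is the trivial case $\dim E_0(H)=1$ (where the frame of the lemma is not available but a rank-one distribution is automatically involutive), which the paper dispatches in one line.
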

\begin{proof}
It is clear when $\dim E_0(H)=1$. If $\dim E_0(H)>1$, consider a local
orthonormal frame $X_1,\ldots,X_n$ in $M^n$ constructed in Lemma
\ref{lem:orthframe}. From condition (ii), we have  $[X,Y]\in E_0(H)$,
for all $X,Y\in E_0(H)$, which completes the proof.
\end{proof}

In the next result we obtain a necessary and sufficient condition for a
biconservative submanifold with parallel mean curvature vector to be 
biharmonic.

\begin{proposition}\label{PropBihEqForCodim2}
Let $f:M^n\to\Qm$ be a biconservative isometric immersion with nonzero
parallel mean curvature vector field. Then, $M$ is biharmonic if and only if
the equation
\begin{equation}\label{BihEqForCodim2}
\tr A_{\xi_1}^2+\|T\|^2=n
\end{equation}
is satisfied, where $\xi_1$ is the unit normal vector field given in 
\eqref{eq:xi1xi2}.
\end{proposition}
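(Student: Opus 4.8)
The plan is to specialize the normal biharmonicity equation \eqref{eq:propbihar2} to our setting and show that, under the biconservativity and parallel mean curvature hypotheses already in force, it collapses to the single scalar condition \eqref{BihEqForCodim2}. First I would recall that biconservativity gives us $\tau_2(f)^T=0$ automatically, so by Proposition \ref{prop:splitting} the immersion is biharmonic if and only if the normal component \eqref{eq:propbihar2} vanishes, namely
\[
\tr\alpha_f(A_H(\cdot),H)-\Delta^\perp H+2\tr\big(\tilde R(\cdot,H)\cdot\big)^\perp=0.
\]
Since $H$ is parallel in the normal bundle, the term $\Delta^\perp H$ vanishes, so the equation reduces to a balance between the first term, which is purely intrinsic to the second fundamental form, and the curvature term, which I would compute explicitly from the known expression for $\tilde R$ in the product $\Qm$.

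Next I would evaluate each surviving term against the orthonormal normal frame $\{\xi_1,\xi_2\}$ of \eqref{eq:xi1xi2}. Writing $H=\|H\|\xi_1$ and using $A_H=\|H\|A_{\xi_1}$, the term $\tr\alpha_f(A_H(\cdot),H)$ becomes $\|H\|^2\tr\alpha_f(A_{\xi_1}(\cdot),\xi_1)$, whose $\xi_1$- and $\xi_2$-components are $\|H\|^2\tr A_{\xi_1}^2$ and $\|H\|^2\tr(A_{\xi_1}A_{\xi_2})$ respectively. The Ricci-equation consequence $[A_H,A_\eta]=0$, together with the block forms \eqref{ShapeOpFORMS1} from Lemma \ref{lem:orthframe} and the identity \eqref{eq:ShOpFo2} that $\tr BS_1=0$, should force the $\xi_2$-component to vanish, so that only the $\xi_1$-direction carries information. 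For the curvature contribution I would substitute the Gauss-type expression for $\tilde R$ of the ambient $\Qm$ into $2\tr(\tilde R(\cdot,H)\cdot)^\perp$; the terms involving the factor $\dt$ contract with $T$ and produce, after projecting onto $\xi_1$, a multiple of $\epsilon(n-\|T\|^2)$, using that $\langle H,\eta\rangle=0$ kills the cross terms and that $\|\dt\|^2=1$ decomposes as in \eqref{eq:dt2}.

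Assembling the $\xi_1$-components then yields, up to the nonzero scalar $\epsilon\|H\|^2$, precisely the relation $\tr A_{\xi_1}^2+\|T\|^2=n$, which is \eqref{BihEqForCodim2}; conversely, retracing the computation shows this single equation is enough to annihilate the whole normal component, giving biharmonicity. The main obstacle I anticipate is the careful bookkeeping of the curvature term $\tr(\tilde R(\cdot,H)\cdot)^\perp$: one must correctly expand the product-space curvature tensor, keep track of which contractions survive after using $A_HT=0$ from \eqref{eq:biconprod3} and the orthogonality $\langle H,\eta\rangle=0$, and verify that the $\xi_2$-part genuinely cancels rather than imposing a second, spurious constraint. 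Getting the constant $n$ (rather than some other multiple) right hinges on summing the curvature trace over the full orthonormal tangent frame and correctly separating the contribution of $X_1=T/\|T\|$ from the remaining directions.
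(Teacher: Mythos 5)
Your proposal is correct and follows essentially the same route as the paper's (very terse) proof: reduce to the vanishing of the normal component \eqref{eq:propbihar2} via Proposition \ref{prop:splitting}, kill $\Delta^\perp H$ by parallelism, compute the product-space curvature trace using $\pl H,\eta\pr=0$, and use \eqref{eq:ShOpFo2} to show the $\xi_2$-component imposes no extra condition. You simply supply the bookkeeping that the paper leaves implicit.
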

\begin{proof}
By Proposition \ref{prop:splitting}, $M$ is biharmonic if and only if the
equation \eqref{eq:propbihar2} is satisfied. Consider the local orthonormal
frame $\{X_1,\ldots,X_n\}$ given in Lemma \ref{lem:orthframe}. Since, the
mean curvature vector field $H$ is parallel and $\pl H,\eta\pr=0$, the
equation \eqref{eq:propbihar2} turns into \eqref{BihEqForCodim2} by virtue
of \eqref{eq:ShOpFo2}.
\end{proof}

\section{Biconservative submanifolds in $\Qf$}\label{sec:main}

In this section we prove Theorem \ref{theo:main} in two steps.
In the fist one, we prove that there is an explicit way to construct
$3$-dimensional biconservative submanifolds in $\Qf$ with
parallel mean curvature vector field. In the second step, we prove
that any $3$-dimensional biconservative submanifolds in $\Qf$,
with nonzero parallel mean curvature vector field, is locally given
as in the previous construction.

\subsection{Examples of biconservative submanifolds}

Here we prove the first part of Theorem \ref{theo:main}.

\begin{theorem}\label{theo:main1}
Let $\phi:M^2\to\Qt^2_a\times\R$ be an oriented minimal surface
such that the vector field $T_\phi$ defined by \eqref{eq:dt} is 
nowhere vanishing, where $a\neq 0$ and $|a|<1$. Let $b>0$ be a
real number such that $a^2+b^2=1$. Let now 
\[
f:M^3=M^2\times I\to\Qf
\]
be given by
\begin{eqnarray}\label{eq:localf}
f(p,s)=\left(b\cos\frac{s}{b},b\sin\frac{s}{b},\phi(p)\right).
\end{eqnarray}
Then the map $f$ defines, at regular points, an isometric immersion with 
$\pl H,\eta\pr=0$. Moreover, $f$ is a biconservative isometric immersion
with parallel mean curvature vector field if and only if $\phi$ is a vertical
cylinder.
\end{theorem}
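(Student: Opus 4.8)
The plan is to argue extrinsically, composing $f$ with the inclusion $i:\Qf\to\E^6$ and reading off the product structure implicit in \eqref{eq:localf}. Writing an $\E^6$-point as $(u,z)$ with $u\in\E^5$ and $z\in\R$, the map $\tilde f=i\circ f$ takes the form $\tilde f(p,s)=(b\cos(s/b),b\sin(s/b),\tilde\phi(p))$, so that $f(M^3)$ is the Riemannian product of the circle $\Sp^1(b)$ with $\phi(M^2)$, sitting inside $\Sp^1(b)\times\Qt^2_a\subset\Qt^4_\epsilon$ times the $\R$-factor. First I would record that $\tilde f_\ast\partial_s=(-\sin(s/b),\cos(s/b),0)$ is a unit field orthogonal to every $\tilde f_\ast X=(0,\tilde\phi_\ast X)$, $X\in TM^2$; this shows at once that $f$ is an isometric immersion wherever $\phi$ is (the regular points), and that the splitting \eqref{eq:dt} for $f$ restricts to that of $\phi$, since $\dt$ is orthogonal to the circle factor. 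Hence $T=T_\phi$ and $\eta=\eta_\phi$ under the obvious identifications, and in particular $T$ is nowhere vanishing.

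Next I would compute the second fundamental form of $\tilde f$ in $\E^6$ and project twice: first discard the component along the unit normal $\mathcal P$ of $\Qf$ in $\E^6$ to obtain $\alpha_f$, then decompose along the fields $\xi_1,\xi_2$ of \eqref{eq:xi1xi2}, where $\xi_1$ is the unit normal of the product $\Sp^1(b)\times\Qt^2_a$ in $\Qt^4_\epsilon$ and $\xi_2$ is the lift of the unit normal $N_\phi$ of $\phi$. In the orthonormal frame $\{\partial_s,X_1,X_2\}$ with $X_1=T/\|T\|$ tangent to $M^2$, the expected output is $A_{\xi_1}=\diag(-a/b,\ (b/a)\|\eta\|^2,\ b/a)$, together with $A_{\xi_2}\partial_s=0$ and $A_{\xi_2}|_{TM^2}$ equal to the shape operator of $\phi$; the mixed terms $\alpha_f(\partial_s,X)$ vanish because the circle factor is flat in $\E^6$ and $s$-independent. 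Since $\phi$ is minimal, $\tr A_{\xi_2}=0$, whence $H=\tfrac13(\tr A_{\xi_1})\xi_1$ and $\langle H,\eta\rangle=\|\eta\|\langle H,\xi_2\rangle=0$, which is the first assertion of the theorem.

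For the equivalence I would write $H=h\xi_1$ with $h=\tfrac13\tr A_{\xi_1}=\tfrac13\big(-a/b+(b/a)(2-\|T\|^2)\big)$, so that $H$ is parallel exactly when $h$ is constant (equivalently $\|T\|$ constant) and $h\,\nabla^\perp\xi_1=0$. The single normal-connection coefficient follows cleanly from \eqref{eq:vectorEta}: taking the $\xi_1$-part of $\alpha_f(X,T)=-\nabla^\perp_X\eta$ and using $A_{\xi_1}X_1=(b/a)\|\eta\|^2X_1$ gives $\langle\nabla^\perp_X\xi_1,\xi_2\rangle=(b/a)\|T\|\,\|\eta\|\,\langle X,X_1\rangle$. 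Thus, as $\|T\|\neq0$, we have $\nabla^\perp\xi_1=0$ if and only if $\eta\equiv0$, which is precisely the condition that $\phi$ be a vertical cylinder; in that case $\|T\|\equiv1$ is constant, so $H$ is parallel, and since $\langle H,\eta\rangle=0$ the immersion $f$ is biconservative by \eqref{eq:biconprod}. For the reverse implication I would avoid the connection computation altogether: if $f$ is biconservative with nonzero parallel $H$, then \eqref{eq:biconprod3} gives $A_HT=0$, while $A_{\xi_1}T=(b/a)\|\eta\|^2T$ forces $\|\eta\|\equiv0$, again a vertical cylinder (nonzero $H$ being the case of interest, and automatic here as soon as $a\neq b$).

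The conceptual steps — the product structure, the identifications $T=T_\phi$, $\eta=\eta_\phi$, and $\langle H,\eta\rangle=0$ from minimality — are short. The main obstacle is the explicit extrinsic computation of $\alpha_f$, in particular the two-stage orthogonal projection (removing $\mathcal P$ and then splitting along $\xi_1,\xi_2$) carried out uniformly for $\epsilon=\pm1$, where the Lorentzian inner product flips several signs and alters the radii in $\Sp^1(b)\times\Qt^2_a\subset\Qt^4_\epsilon$; one must also verify that $\{\partial_s,X_1,X_2\}$ genuinely diagonalizes $A_{\xi_1}$ and that the off-diagonal block vanishes. Once the block forms of $A_{\xi_1}$ and $A_{\xi_2}$ are established, the parallelism analysis and the equivalence drop out immediately.
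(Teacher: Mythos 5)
Your proposal is correct and follows essentially the same route as the paper's proof: the same explicit orthonormal normal frame $\{\xi_1,\xi_2\}$ adapted to the product $\Sp^1(b)\times\Qt^2_a$, the identifications $T=T_\phi$, $\eta=\eta_\phi$, the block form of $A_{\xi_2}$ with minimality of $\phi$ giving $H$ proportional to $\xi_1$ and hence $\pl H,\eta\pr=0$, and the reduction of parallelism of $H$ to the vanishing of the vertical component $N_4$ of the normal of $\phi$ (your $A_{\xi_1}$ differs from the one implicit in the paper only by the choice of orientation of $\xi_1$). You merely supply more detail than the paper's ``straightforward to verify'' step, and your alternative derivation of the ``only if'' direction from $A_HT=0$ together with $A_{\xi_1}T=\pm(b/a)\|\eta\|^2T$ is a clean shortcut consistent with Section \ref{sec:codim2}.
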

\begin{proof}
Let $\{X_1,X_2,X_3\}$ be a local orthonormal tangent frame of $M^3$,
with $X_3=\partial_s$. By putting $Y_1=\pi_\ast X_1$ and 
$Y_2=\pi_\ast X_2$, where $\pi:M^3\to M^2$ denotes the canonical
projection, $\pi(p,s)=p$, we get that $\{Y_1,Y_2\}$ is a
local orthonormal tangent frame of $M^2$. If $N=(N_1,N_2,N_3,N_4)$ 
denotes the unit normal
vector field of $M^2$ in $\Qt^2_a\times\R$, then
\[
\xi_1=\left(-a\cos\frac{s}{b},-a\sin\frac{s}{b},\frac{b}{a}(\pi_1\circ\phi)\right)
\quad\text{and}\quad
\xi_2=(0,0,N)
\]
provides a local orthonormal normal frame of $f$ in $\Qf\subset\E^6$,
where $\pi_1:\Qt^2_a\times\R\to\Qt^2_a$ denotes the canonical projection.
Note that we have 
\[
\pl\xi_1,\dt\pr=0.
\]
In terms of the tangent frame $\{Y_1,Y_2\}$ of $M^2$, the shape 
operator $A_N$ is given by
\[
A_N=\left(
\begin{array}{cc}
a_{11}&a_{12}\\
a_{12}&-a_{11}
\end{array}
\right),
\]
for some smooth functions $a_{11}$ and $a_{12}$. By a direct
computation, one can see that the matrix representation of $A_{\xi_2}$,
with respect to $\{X_1,X_2,X_3\}$, take the form 
\begin{equation}\label{eq:matrixi2}
A_{\xi_2}=\left(
\begin{array}{ccc}
a_{11}&a_{12}&0\\
a_{12}&-a_{11}&0\\
0&0&0
\end{array}
\right).
\end{equation}
It follows from \eqref{eq:matrixi2} that $H=c\cdot\xi_1$, where 
$c=\pl H,\xi_1\pr$, which implies $\pl H,\eta\pr=0$. Moreover,
we have $T_\phi = \pi_\ast T_f$, since $\pl\partial_t,\partial_s\pr=0$.
Thus, as $T_\phi$ is nowhere vanishing, and therefore also $T_f$,
it is straightforward to verify that $H$ is parallel
if and only if $N_4=0$. It means that $\partial_t$ is orthogonal to $M^2$,
which implies that $\|T_\phi\|=1$. Thus, $M^2$ is a vertical cylinder
$M^2=\gamma\times\R$ over a geodesic curve $\gamma$ in $\Qt^2_a$.
\end{proof}

\begin{corollary}\label{cor:main1}
If $f:M^3\to\Qf$ is a biconservative submanifold with nonzero parallel
mean curvature vector field, locally given as in \eqref{eq:localf}, then
$f$ is an immersion in class $\A$.
\end{corollary}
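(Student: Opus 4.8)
The plan is to show that the biconservative condition together with parallel mean curvature forces the normal field $\eta$ to vanish identically, so that $f$ is a vertical cylinder, which is one of the trivial members of the class $\A$ recalled in Section \ref{sec:classA}. Concretely, I would start from the output of Theorem \ref{theo:main1}: since $f$ is biconservative with nonzero parallel $H$, the surface $\phi$ is a vertical cylinder, and the computation there identifies this geometric fact with the analytic condition $N_4=0$, where $N=(N_1,N_2,N_3,N_4)$ is the unit normal of $M^2$ in $\Qt^2_a\times\R$.

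Next I would compute the normal component $\eta$ of $\dt$ in the orthonormal normal frame $\{\xi_1,\xi_2\}$ of \eqref{eq:xi1xi2}. The proof of Theorem \ref{theo:main1} already records $\pl\xi_1,\dt\pr=0$, so $\eta$ has no $\xi_1$-component. Writing $\dt=(0,0,0,0,0,1)$ and $\xi_2=(0,0,N)$ as vectors in $\E^6$ gives $\pl\dt,\xi_2\pr=N_4$. Hence $\eta=\pl\dt,\xi_2\pr\,\xi_2=N_4\,\xi_2$, and the condition $N_4=0$ from the previous step yields $\eta\equiv 0$.

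Finally, once $\eta\equiv 0$ I would conclude directly that $f$ belongs to $\A$. By \eqref{eq:vectorEta} we obtain $\alpha_f(X,T)=-\nabla^\perp_X\eta=0$ for every $X\in TM$, so $\pl A_\xi T,X\pr=\pl\alpha_f(X,T),\xi\pr=0$ for every normal $\xi$; thus $T$ lies in the kernel of $A_\xi$ for all $\xi$, and in particular $T$ is an eigenvector (with eigenvalue zero) of every shape operator of $f$, which is exactly the defining property of the class $\A$. Equivalently, $\eta\equiv 0$ means $\dt=f_\ast T$ is tangent, so $M^3$ is a vertical cylinder $N^2\times\R$, one of the trivial examples in $\A$ listed in Section \ref{sec:classA}.

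I expect no serious analytic obstacle here, since the corollary is essentially a bookkeeping consequence of Theorem \ref{theo:main1}. The one point that requires care is the compatibility of the two descriptions of the vertical-cylinder condition: I must make explicit that the geometric statement \emph{``$\phi$ is a vertical cylinder''} coincides with the algebraic statement $N_4=0$, hence with $\eta\equiv0$, by recording the identification $\pl\dt,\xi_2\pr=N_4$. I would also verify that $T$ remains nowhere vanishing in this situation (indeed it becomes the unit field $\dt$), which keeps the conclusion consistent with the standing hypothesis of Theorem \ref{theo:main} that $T$ is nowhere vanishing.
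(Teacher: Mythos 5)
Your argument is correct, but it reaches the conclusion by a genuinely different route from the paper's. Both proofs start by invoking Theorem \ref{theo:main1} to get that $\phi$ is a vertical cylinder. From there the paper works through the shape operators: a vertical cylinder $\phi$ lies in class $\A$, so $T_\phi$ is a principal direction of $\phi$; diagonalizing $A_N$ in the frame $\{T_\phi/\|T_\phi\|,Z/\|Z\|\}$ and using the block form \eqref{eq:matrixi2}, the paper reads off that $T_f$ is an eigenvector of $A_{\xi_2}$ (the corresponding statement for $A_{\xi_1}$ being already contained in \eqref{eq:biconprod3}). You instead translate ``vertical cylinder'' into $N_4=0$, note that $\eta=\pl\dt,\xi_1\pr\xi_1+\pl\dt,\xi_2\pr\xi_2=N_4\,\xi_2$ in the coordinates of that proof, and conclude $\eta\equiv0$; the structural identity \eqref{eq:vectorEta} then gives $\alpha_f(\cdot,T)=0$ in one stroke. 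Your route is shorter, avoids any choice of tangent frame at the final step, and yields the slightly stronger statement that $A_\xi T=0$ for \emph{every} normal direction $\xi$ simultaneously --- equivalently, that $M^3$ is itself a vertical cylinder over a surface of $\Qt^4_\epsilon$, i.e.\ one of the trivial members of class $\A$ recalled in Section \ref{sec:classA}. This makes explicit something the paper's proof leaves implicit (and which sits in mild tension with the standing assumption in Section \ref{sec:codim2} that $\eta$ is nowhere vanishing): the immersions of Theorem \ref{theo:main1} that are biconservative with parallel mean curvature all have $\eta\equiv0$. Your two flagged checkpoints --- the identification $\pl\dt,\xi_2\pr=N_4$ and the fact that $\|T\|\equiv1$ so $T$ stays nowhere vanishing --- are both correct and are exactly the right things to verify.
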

\begin{proof}
As $f$ is locally given as in \eqref{eq:localf} it follows, in particular,
that $\phi$ is in class $\A$. Thus, the vector field $T_\phi$ 
associated to $\phi$, given in \eqref{eq:dt}, is a principal
direction of $\phi$. This implies that
\[
\pl A_\zeta T_\phi,Z\pr = 0,
\]
for all $\zeta\in TM^\perp_\phi$, where $Z$ is tangent to $\phi$ and
orthogonal to $T_\phi$. With the notations as in Theorem 
\ref{theo:main1}, and by considering 
\[
Y_1=\frac{T_\phi}{\|T_\phi\|} \quad\text{and}\quad
Y_2=\frac{Z}{\|Z\|},
\]
we have
\[
A_N=\left(
\begin{array}{cc}
a_{11} & 0 \\
0 & a_{22}
\end{array}
\right)
\quad\text{and}\quad
A_{\xi_2}=\left(
\begin{array}{ccc}
a_{11} & 0 & 0 \\
0 & a_{22} & 0 \\
0 & 0 & 0
\end{array}
\right).
\]
This shows that $T_f$ is an eigenvector of $A_{\xi_2}$, since 
$T_\phi = \pi_\ast T_f$.
\end{proof}

\subsection{Classification results in $\Qf$}

Finally, in this subsection, we prove the converse of Theorem
\ref{theo:main}. Here we will consider biconservative isometric immersion
$f:M^3\to\Qf$, with nonzero parallel mean curvature vector field $H$
such that $\dim E_0(H)=2$. Let us consider the local orthonormal 
frame $\{X_1,X_2,X_3\}$ given in Lemma
\ref{lem:orthframe}. Denoting by $\xi_1$ and $\xi_2$ as in \eqref{eq:xi1xi2},
we have
\begin{equation}\label{Subm3inS4RShapeOps1a}
A_{\xi_1}=\left(
\begin{array}{ccc}
0 & 0 & 0 \\
0 & 0 & 0 \\
0 & 0 & 3\|H\| \\
\end{array}
\right)
\end{equation}
and 
\begin{equation}\label{Subm3inS4RShapeOps1b}
A_{\xi_2}=\left(
\begin{array}{ccc}
a_{11} & a_{12} & 0 \\
a_{12} & a_{22} & 0 \\
0 & 0 & a_{33} \\
\end{array}
\right),
\end{equation}
for some smooth functions $a_{11}$, $a_{22}$ and $a_{33}$, with
$a_{11}+a_{22}+a_{33}=0$. Note that, from \eqref{eq:ShOpFo2}, we
have $a_{33}=0$ and thus, \eqref{Subm3inS4RShapeOps1b} becomes 
\eqref{eq:matrixi2}.

On the other hand, we can write the vector field $\partial_t$ as
\begin{eqnarray}\label{eq:delttheta}
\partial_t=\cos\theta X_1+\sin\theta\xi_2,
\end{eqnarray}
for a smooth function $\theta\neq\frac\pi2.$ Applying $X_3$ to 
\eqref{eq:delttheta}, we obtain 
\[
\nabla_{X_3}X_1=0.
\]
Moreover,
from the Codazzi equation, we obtain $\pl\tilde R(X_2,X_3)X_3,\xi_1\pr=0$,
that implies
\begin{eqnarray}\label{Eq1Fornablae3e1}
\pl\nabla_{X_3}X_2,X_3\pr=0.
\end{eqnarray}

By putting $\pl\nabla_{X_i}X_1,X_2\pr=\phi_i$, for $1\leq i\leq 2$,
we have the following:

\begin{lemma}\label{lem:LCivita}
In terms of the local orthonormal frame $\{X_1,X_2,X_3\}$ in $M^3$,
the Levi-Civita connection of $M^3$ is given by
\begin{eqnarray}\label{eq:LCivita}
\begin{array}{lll}
\nabla_{X_1}X_1=\phi_1X_2, & \nabla_{X_1}X_2=-\phi_1X_1, &
\nabla_{X_1}X_3=0, \\
\nabla_{X_2}X_1=\phi_2X_2, & \nabla_{X_2}X_2=-\phi_2X_1, &
\nabla_{X_2}X_3=0, \\
\nabla_{X_3}X_1=0, & \nabla_{X_3}X_2=0, & \nabla_{X_3}X_3=0. \\
\end{array}
\end{eqnarray}
\end{lemma}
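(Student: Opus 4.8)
The plan is to read off every connection coefficient $\langle\nabla_{X_i}X_j,X_k\rangle$ of the frame $\{X_1,X_2,X_3\}$ and to show that only $\phi_1=\langle\nabla_{X_1}X_1,X_2\rangle$ and $\phi_2=\langle\nabla_{X_2}X_1,X_2\rangle$ survive. Since the frame is orthonormal, metric compatibility of the Levi-Civita connection gives $\langle\nabla_{X_i}X_j,X_j\rangle=0$ and $\langle\nabla_{X_i}X_j,X_k\rangle=-\langle\nabla_{X_i}X_k,X_j\rangle$; hence for each fixed $i$ the data are encoded by the three antisymmetric scalars $\langle\nabla_{X_i}X_1,X_2\rangle$, $\langle\nabla_{X_i}X_1,X_3\rangle$ and $\langle\nabla_{X_i}X_2,X_3\rangle$, and it suffices to evaluate these nine numbers.

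First I would dispose of the mixing between $E_0(H)=\spa\{X_1,X_2\}$ and the line spanned by $X_3$. By condition (ii) of Lemma \ref{lem:orthframe} we have $\nabla_{X_i}X_j\in E_0(H)$ for all $i,j\in\{1,2\}$, so the $X_3$-components $\langle\nabla_{X_1}X_1,X_3\rangle$, $\langle\nabla_{X_1}X_2,X_3\rangle$, $\langle\nabla_{X_2}X_1,X_3\rangle$ and $\langle\nabla_{X_2}X_2,X_3\rangle$ all vanish. Dualizing by metric compatibility, for instance $\langle\nabla_{X_1}X_3,X_1\rangle=-\langle X_3,\nabla_{X_1}X_1\rangle=0$ and likewise against $X_2$, this forces $\nabla_{X_1}X_3=\nabla_{X_2}X_3=0$. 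Thus the only coefficients that can be nonzero among $\nabla_{X_1}X_\bullet$ and $\nabla_{X_2}X_\bullet$ are $\langle\nabla_{X_1}X_1,X_2\rangle$ and $\langle\nabla_{X_2}X_1,X_2\rangle$, which are precisely $\phi_1$ and $\phi_2$; the companion identities $\nabla_{X_1}X_2=-\phi_1X_1$ and $\nabla_{X_2}X_2=-\phi_2X_1$ then follow from antisymmetry.

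Next I would treat the last row. The relation $\nabla_{X_3}X_1=0$ has already been obtained by applying $X_3$ to \eqref{eq:delttheta}, so $\langle\nabla_{X_3}X_1,X_2\rangle=\langle\nabla_{X_3}X_1,X_3\rangle=0$. Metric compatibility gives $\langle\nabla_{X_3}X_2,X_1\rangle=-\langle X_2,\nabla_{X_3}X_1\rangle=0$, while \eqref{Eq1Fornablae3e1} supplies $\langle\nabla_{X_3}X_2,X_3\rangle=0$; together with orthonormality this yields $\nabla_{X_3}X_2=0$. Finally $\langle\nabla_{X_3}X_3,X_1\rangle=-\langle X_3,\nabla_{X_3}X_1\rangle=0$ and $\langle\nabla_{X_3}X_3,X_2\rangle=-\langle X_3,\nabla_{X_3}X_2\rangle=0$ give $\nabla_{X_3}X_3=0$. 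Collecting the three steps reproduces the table \eqref{eq:LCivita}.

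The point is that essentially no new computation is needed: once Lemma \ref{lem:orthframe}(ii) is invoked, the entire $E_0(H)$--$X_3$ coupling disappears, and the remainder is pure metric-compatibility bookkeeping resting on the two previously established facts $\nabla_{X_3}X_1=0$ and \eqref{Eq1Fornablae3e1}. The only place demanding care is to confirm that these inputs are mutually consistent and leave no further independent coefficient --- in particular that $\langle\nabla_{X_3}X_2,X_1\rangle$ is forced to vanish rather than being a free function --- which the antisymmetry relation against $\nabla_{X_3}X_1=0$ settles cleanly. I therefore expect the only genuine ``obstacle'' to be organizational: arranging the nine coefficients so that each is pinned down exactly once.
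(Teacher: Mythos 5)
Your proof is correct and is precisely the ``straightforward computation'' the paper omits: metric compatibility in the orthonormal frame plus Lemma \ref{lem:orthframe}(ii), the relation $\nabla_{X_3}X_1=0$, and \eqref{Eq1Fornablae3e1} pin down all nine coefficients, leaving only $\phi_1$ and $\phi_2$. Nothing further is needed.
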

\begin{proof}
A straightforward computation.
\end{proof}

\begin{lemma}\label{lem:system}
There exists a local coordinate system $(u_1,u_2,s)$ in $M^3$ such 
that $E_0(H)=\spa\{\partial_{u_1},\partial_{u_2}\}$, $X_3=\partial_s$
and $f$ decomposes as 
\begin{eqnarray}\label{eq:system4}
f(u_1,u_2,s)=\Gamma_1(s)+\Gamma_2(u_1,u_2),
\end{eqnarray}
for some smooth functions $\Gamma_1$ and $\Gamma_2$. Moreover,
$f(u_1,u_2,\cdot)$ are the integral curves of $X_3$ for any $(u_1,u_2)$
and $f(\cdot,\cdot,s)$ are the integral submanifolds of $E_0(H)$ for any
$s$.
\end{lemma}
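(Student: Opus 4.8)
The plan is to construct the adapted coordinates first, and then to prove the additive decomposition \eqref{eq:system4} by showing that the mixed second derivatives of $\tf=i\circ f$ in the ambient flat space $\E^6$ vanish. Throughout I will use two facts already available: by \eqref{Subm3inS4RShapeOps1a} the distribution $E_0(H)=\spa\{X_1,X_2\}$ coincides with the orthogonal complement $X_3^\perp$, and by Corollary \ref{cor:inv} it is involutive, so it integrates to a foliation by surfaces.

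\textbf{Step 1 (the chart).} Since $X_3$ is unit with $\nabla_{X_3}X_3=0$ by Lemma \ref{lem:LCivita}, its integral curves are geodesics, and I would obtain the chart by flowing a leaf along $X_3$: fix a leaf $L_0$ of $E_0(H)$ through the base point, pick coordinates $(u_1,u_2)$ on $L_0$, and define $(u_1,u_2,s)\mapsto\psi_s(\,\cdot\,)$, where $\psi_s$ is the flow of $X_3$. Then $\partial_s=X_3$ and $[\partial_{u_i},X_3]=0$ by construction. To verify that $\partial_{u_1},\partial_{u_2}$ still span $E_0(H)=X_3^\perp$ for $s\neq0$, I would show that $\pl\partial_{u_i},X_3\pr$ is constant along the flow: from metric compatibility, $\nabla_{X_3}X_3=0$, and $\nabla_{X_3}\partial_{u_i}=\nabla_{\partial_{u_i}}X_3$ (the coordinate fields commute), one gets $X_3\pl\partial_{u_i},X_3\pr=\tfrac12\partial_{u_i}\pl X_3,X_3\pr=0$; since this quantity vanishes on $L_0$, it vanishes identically, so $\partial_{u_1},\partial_{u_2}\in E_0(H)$ everywhere.

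\textbf{Step 2 (separation).} Letting $\bar\nabla$ be the flat connection of $\E^6$, the decomposition \eqref{eq:system4} is equivalent to $\bar\nabla_{X_3}(\tf_\ast\partial_{u_i})=0$ for $i=1,2$. Applying the Gauss formula twice, first for $f$ in $\Qf$ and then for the inclusion $i:\Qf\to\E^6$, gives
\[
\bar\nabla_{X_3}(\tf_\ast\partial_{u_i})=\tf_\ast(\nabla_{X_3}\partial_{u_i})+\alpha_f(X_3,\partial_{u_i})-\epsilon\pl(f_\ast X_3)_0,(f_\ast\partial_{u_i})_0\pr P,
\]
where $(\,\cdot\,)_0$ denotes the $\Qt^4_\epsilon$-component and $P$ the position vector of the $\Qt^4_\epsilon$-factor. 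I would then check that each term vanishes: the first because $\nabla_{X_3}\partial_{u_i}=\nabla_{\partial_{u_i}}X_3=0$ by Lemma \ref{lem:LCivita} (now that $\partial_{u_i}\in\spa\{X_1,X_2\}$); the second because \eqref{Subm3inS4RShapeOps1a} and \eqref{eq:matrixi2} yield $\alpha_f(X_3,X_1)=\alpha_f(X_3,X_2)=0$, hence $\alpha_f(X_3,\partial_{u_i})=0$; and the third because $f_\ast X_3$ has no $\dt$-component (indeed $\pl f_\ast X_3,\dt\pr=\pl X_3,T\pr=0$, as $T\in E_0(H)=X_3^\perp$), so that $\pl(f_\ast X_3)_0,(f_\ast\partial_{u_i})_0\pr=\pl X_3,\partial_{u_i}\pr=0$.

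\textbf{Step 3 (integration).} From $\partial_s\partial_{u_i}\tf=0$ it follows that $\partial_{u_i}\tf$ is independent of $s$; integrating yields $\tf(u_1,u_2,s)=\Gamma_1(s)+\Gamma_2(u_1,u_2)$ with $\Gamma_1(s)=\tf(u_1^0,u_2^0,s)$, which is \eqref{eq:system4}. Since $\partial_s=X_3$, the curves $s\mapsto f(u_1,u_2,s)$ are the integral curves of $X_3$, and since $\spa\{\partial_{u_1},\partial_{u_2}\}=E_0(H)$, the surfaces $(u_1,u_2)\mapsto f(u_1,u_2,s)$ are the integral submanifolds of $E_0(H)$. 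The main obstacle is Step 2: making the two-step Gauss formula precise and, in particular, identifying the inclusion term for the product $\Qt^4_\epsilon\times\R\subset\E^6$; the identity $E_0(H)=X_3^\perp$ together with the block structure of the shape operators is exactly what forces all three contributions to collapse.
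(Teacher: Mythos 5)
Your proof is correct and follows essentially the same route as the paper: both arguments reduce the additive splitting \eqref{eq:system4} to the vanishing of the mixed ambient derivatives $\hat\nabla_{\partial_s}(\tf_\ast\partial_{u_i})$ in $\E^6$, obtained from $\nabla_{\partial_s}\partial_{u_i}=0$ (via Lemma \ref{lem:LCivita} and $[\partial_{u_i},\partial_s]=0$) together with the vanishing of the second fundamental form of $f$ across $E_0(H)$ and $\spa\{X_3\}$ and of the inclusion term for $\Qf\subset\E^6$, followed by integration. The only differences are cosmetic: you build the chart by flowing a leaf of $E_0(H)$ along the geodesic field $X_3$, which yields $\partial_s=X_3$ immediately, whereas the paper invokes the Frobenius product chart for the complementary involutive distributions and then reparametrizes $s$ to normalize $E=1$; your explicit computation of the inclusion term using $\pl X_3,T\pr=0$ is, if anything, a more careful rendering of the paper's appeal to $\alpha_{\tilde f}(X,Y)=\alpha_f(X,Y)$ for orthogonal tangent fields.
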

\begin{proof}
By Corollary \ref{cor:inv}, the tangent bundle $TM$ decomposes
orthogonally as
\[
TM=E_0(H)\oplus(E_0(H))^\perp.
\]
Therefore, there exists a local coordinate
system $(u_1,u_2,s)$ in $M^3$ such that 
\[
E_0(H)=\spa\{\partial_{u_1},\partial_{u_2}\} 
\quad\text{and}\quad
(E_0(H))^\perp=\spa\{\partial_{s}\}
\]
(see \cite[p. 182]{KN}). Thus $X_3=E\partial_s$ for some smooth 
function $E$ on $M^3$. On the other hand, since 
$[\partial_{u_i},\partial_s]=0$, for $1\leq i\leq 2$, we have
$\nabla_{\partial_{u_i}}\partial_s=\nabla_{\partial_s}\partial_{u_i}$.
However, by considering \eqref{eq:LCivita}, one can see that
\begin{eqnarray}\label{eq:system}
\nabla_{\partial_{u_i}}\partial_s\in E_0(H)^\perp
\quad\text{and}\quad
\nabla_{\partial_s}\partial_{u_i}\in E_0(H),
\end{eqnarray}
for $1\leq i\leq 2$. By considering \eqref{eq:system} and \eqref{eq:LCivita},
and taking into account the fact that $\alpha_{\tilde f}(X,Y)=\alpha(X,Y)$,
whenever $X,Y$ are orthogonal tangent vector fields on $M^3$, we
obtain 
\begin{eqnarray}\label{eq:system2}
\hat\nabla_{\partial_{u_i}}\partial_s=\hat\nabla_{\partial_s}\partial_{u_i}=0
\end{eqnarray}
and
\begin{eqnarray}\label{eq:system3}
\partial_{u_i}(E)=0, 
\end{eqnarray}
for all $1\leq i\leq 2$. From \eqref{eq:system2}, we obtain 
\eqref{eq:system4} for some
smooth functions $\Gamma_1$ and $\Gamma_2$. Moreover, 
equation \eqref{eq:system3} implies that $E=E(s)$. Therefore, by
re-defining the parameter $s$ properly, we may assume that $E=1$,
which concludes the proof.
\end{proof}

\begin{proposition}\label{PROPPP2}
Let $f:M^3\to\Qf$ be a biconservative isometric immersion with
nonzero parallel mean curvature vector field $H$. Suppose that
$\dim E_0(H)=2$ and let $p\in M$. Then the following assertions
hold: 
\begin{enumerate}
\item[(i)] An integral submanifold $N$ of $E_0(H)$ through $p$ lies on a
$4$-plane $\Pi_1$ of $\E^6$ containing the factor $\partial_t$. Moreover,
$N$ is congruent to a minimal surface $\phi:M^2\to\Qt^2_a\times\R$. 
\item[(ii)] An integral curve of $X_3$ through $p$ is an open subset of a
circle of radius $b=\frac{1}{\sqrt{c^2+1}}$ contained on a $2$-plane 
$\Pi_2$ of $\E^6$, where $c=3\|H\|$.
\end{enumerate}
\end{proposition}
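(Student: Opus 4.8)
The strategy is to work throughout with the isometric immersion $\tf=i\circ f$ into $\E^6$ and to read off the geometry of the integral surfaces and integral curves of $E_0(H)$ and $X_3$ from the shape operators \eqref{Subm3inS4RShapeOps1a}, \eqref{eq:matrixi2} and from the Levi--Civita connection of Lemma \ref{lem:LCivita}. Two facts will be used repeatedly. First, by \eqref{eq:delttheta} the vector field $X_3$ is orthogonal to $\dt$, so $\tf_\ast X_3$ is tangent to the factor $\Qt^4_\epsilon$ and orthogonal to the position vector $\mathcal P$ of that factor. Second, writing $\mathcal P$ for this position vector (with $\pl\mathcal P,\mathcal P\pr=\epsilon$ and $\mathcal P$ normal to $\Qf$ in $\E^6$), the second fundamental form of the inclusion $i:\Qf\to\E^6$ is $\alpha_i(X,Y)=-\epsilon\pl X^{\Qt},Y^{\Qt}\pr\mathcal P$, where $(\cdot)^{\Qt}$ denotes the component tangent to $\Qt^4_\epsilon$; in particular $\tf_\ast X_3$ is orthogonal to $\xi_1$ and $\xi_1^{\Qt}=\xi_1$.

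For (i), Lemma \ref{lem:LCivita} gives $\nabla_{X_i}X_j\in E_0(H)$ for $i,j\in\{1,2\}$, so the integral surface $N$ of $E_0(H)$ through $p$ is totally geodesic in $M^3$; hence its second fundamental form in $\E^6$ equals $\alpha_{\tf}$ restricted to $E_0(H)$. I would then show that $X_3$ and $\xi_1$ are \emph{constant} vectors of $\E^6$ along $N$, i.e. $\tilde\nabla_X X_3=0$ and $\tilde\nabla_X\xi_1=0$ for every $X\in E_0(H)$. For $X_3$ this follows from $\nabla_X X_3=0$ (Lemma \ref{lem:LCivita}), from $A_{\xi_1}X=0$ and $\pl A_{\xi_2}X,X_3\pr=0$ by \eqref{Subm3inS4RShapeOps1a} and \eqref{eq:matrixi2}, and from the vanishing of the $\mathcal P$-term, since $\tf_\ast X_3$ is orthogonal to $E_0(H)$. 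For $\xi_1$ it follows from $A_{\xi_1}X=0$, from $\nabla^\perp_X\xi_1=0$ (as $\xi_1=H/\|H\|$ is parallel), and again from the vanishing of the $\mathcal P$-term. Since $X_3$ and $\xi_1$ are orthonormal, normal to $N$ and orthogonal to $\dt$, the functions $\pl\tf,X_3\pr$ and $\pl\tf,\xi_1\pr$ are constant on $N$; therefore $N$ lies in the $4$-plane $\Pi_1$ cut out by these two affine conditions, and $\Pi_1$ contains the direction $\dt$.

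To complete (i), since $\Pi_1$ contains $\dt$ it splits as $\Pi_1=\Pi_1'\oplus\R\dt$, whence $\Pi_1\cap(\Qt^4_\epsilon\times\R)=(\Pi_1'\cap\Qt^4_\epsilon)\times\R$; the two affine conditions cut $\Qt^4_\epsilon$ along a totally umbilical $\Qt^2_a$, and $a$ is determined so that $a^2+b^2=1$, with $b$ as in (ii). Moreover $N$ is minimal in $\Qt^2_a\times\R$: it has codimension one there with unit normal $\xi_2$, and its mean curvature is proportional to $\pl A_{\xi_2}X_1,X_1\pr+\pl A_{\xi_2}X_2,X_2\pr=a_{11}+a_{22}=0$, using $a_{11}+a_{22}+a_{33}=0$ and $a_{33}=0$; since $X_1=T/\|T\|$ is tangent to $N$, the associated field $T_\phi$ is nowhere zero. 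For (ii), the integral curve $\gamma(s)=\tf(u_1,u_2,s)$ is unit-speed because $X_3=\partial_s$ by Lemma \ref{lem:system}; using $\nabla_{X_3}X_3=0$, $A_{\xi_1}X_3=c\,X_3$ with $c=3\|H\|$, and $a_{33}=0$, its acceleration is $\gamma''=\alpha_{\tf}(X_3,X_3)=c\xi_1-\epsilon\mathcal P$. Differentiating again with $\tilde\nabla_{X_3}\xi_1=-c\,\tf_\ast X_3$ and $\tilde\nabla_{X_3}\mathcal P=\tf_\ast X_3$ yields $\gamma'''=-\kappa^2\gamma'$ with $\kappa$ constant; identifying $\kappa$ shows that $\gamma$ is an open arc of a circle of radius $b=1/\sqrt{c^2+1}$ lying in the $2$-plane $\Pi_2=\spa\{\gamma',\gamma''\}$.

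The main obstacle I expect lies in the reduction-of-codimension argument of (i): one must verify that $X_3$ and $\xi_1$ are parallel in $\E^6$ along the whole surface $N$, and then identify $\Pi_1\cap\Qf$ correctly as a product $\Qt^2_a\times\R$ with the value of $a$ matching the circle radius of (ii). Once the acceleration $\gamma''=c\xi_1-\epsilon\mathcal P$ is in hand, the circle statement (ii) reduces to a routine third-order Frenet computation.
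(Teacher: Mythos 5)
Your proposal is correct and follows essentially the same route as the paper: for (i) the paper likewise shows that $X_3$ and $\xi_1$ are parallel in $\E^6$ along the integral surface $N$ (there denoted $\hat\nabla_X\zeta_3=\hat\nabla_X\zeta_4=0$), deduces the $4$-plane $\Pi_1$ containing $\partial_t$, and reads off minimality from the traceless $2\times2$ block of $A_{\xi_2}$; for (ii) the paper's pair of equations $\hat\nabla_{\gamma'}\gamma'=\sqrt{c^2+1}\,\zeta$, $\hat\nabla_{\gamma'}\zeta=-\sqrt{c^2+1}\,\gamma'$ is exactly your Frenet computation with $\gamma''=c\xi_1-\epsilon\mathcal P$ after normalizing. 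No substantive difference in method.
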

\begin{proof}
Let $N$ be an integral submanifold of $E_0(H)$ through $p$. Define 
vector fields $\zeta_1,\ldots,\zeta_6$ along $N$ by 
\[
\zeta_i=X_i\vert_N \quad\text{and}\quad \zeta_j=\xi_j\vert_N,
\]
for $1\leq i\leq3$ and $1\leq j\leq3$, where $\xi_3$ is the restriction of
the unit normal vector field of the immersion $f:M^3\to\Qf$ to $M^3$.
Note that $\zeta_1,\zeta_2$ span $TN$, while the vector fields
$\zeta_3,\ldots,\zeta_6$ span the normal bundle $TN^\perp$ in $\E^6$.
By taking into account the fact that $\alpha_{\tilde f}(X,Y)=\alpha_f(X,Y)$,
whenever $X,Y$ are orthogonal tangent vector fields on $M^3$, and 
considering \eqref{eq:matrixi2}, \eqref{Subm3inS4RShapeOps1a} and
Lemma \ref{lem:LCivita}, we get
\[
\hat\nabla_X\zeta_3=\hat\nabla_X\zeta_4=0,
\]
for all $X\in TM$, where $\hat\nabla$ is the Levi-Civita connection
of $\E^6$. This yields that $N$ lies on a $4$-plane $\Pi_1$ on which 
$\partial_t$ lies. Moreover, the unit normal vector field of $N$ in 
$\Pi_1\cap(\Qf)\cong\Qt^2_a\times\R$ is $\zeta_5$, and the
shape operator of $N$ along $\zeta_5$ becomes
\[
A_{\zeta_5}=\left(
\begin{array}{cc}
a_{11}&a_{12}\\
a_{12}&-a_{12}\\
\end{array}
\right),
\]
which shows that $N$ is congruent to a minimal surface in 
$\Qt^2_a\times\R$. This proves the assertion (i). In order to prove (ii),
let us consider an integral curve $\gamma$ of $X_3$ through $p$ and
define
\[
\zeta = \frac{c}{\sqrt{c^2+1}}\zeta_1-\frac{1}{\sqrt{c^2+1}}\zeta_3
\]
as a vector field along $\gamma$. Then we have
\[
\hat\nabla_{\gamma'}\gamma'=\sqrt{c^2+1}\zeta 
\quad\text{and}\quad
\hat\nabla_{\gamma'}\zeta=-\sqrt{c^2+1}\gamma'.
\]
Thus $\gamma$ is an open subset of a circle lying on the $2$-plane
$\Pi_2$ spanned by $\gamma'$ and $\zeta$. This proves (ii) and
concludes the proof.
\end{proof}

By summing up Lemma \ref{lem:system} and Proposition \ref{PROPPP2},
we get the converse of Theorem \ref{theo:main}, which can be stated
as follow.

\begin{theorem}\label{MainThm1}
Let $f:M^3\to\Qf$ be a biconservative isometric immersion with
nonzero parallel mean curvature vector field $H$.Then $f$ is 
either an open subset of a slice $\Qt^4_\epsilon\times\{t_0\}$
for some $t_0\in\R$, an open subset of a Riemannian product
$N^3\times\R$, where $N^3$ is a hypersurface of $\Qt^4_\epsilon$,
or it is locally congruent to the immersion $f$ described in Theorem \ref{theo:main1}. In particular, $f$ belongs to class $\A$.
\end{theorem}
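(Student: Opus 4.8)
The plan is to consolidate the structural information already extracted in Lemma~\ref{lem:system} and Proposition~\ref{PROPPP2} into a global parametrization, and then dispose of the degenerate cases $\dim E_0(H)\in\{1,3\}$ separately. The heart of the argument is the case $\dim E_0(H)=2$, where both the fiber direction $X_3$ and the leaf distribution $E_0(H)$ have been fully analyzed. First I would invoke Lemma~\ref{lem:system} to write, in suitable coordinates $(u_1,u_2,s)$, the decomposition $f(u_1,u_2,s)=\Gamma_1(s)+\Gamma_2(u_1,u_2)$ in $\E^6$, with $X_3=\partial_s$ and $E_0(H)=\spa\{\partial_{u_1},\partial_{u_2}\}$. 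By Proposition~\ref{PROPPP2}(ii), the $s$-curves are open arcs of circles of radius $b=1/\sqrt{c^2+1}$ with $c=3\|H\|$, lying in a fixed $2$-plane $\Pi_2$ containing the $\partial_t$ factor; this forces $\Gamma_1(s)=\bigl(b\cos\frac{s}{b},b\sin\frac{s}{b},0,0,0,0\bigr)$ up to an isometry of $\E^6$, precisely the first two coordinates in \eqref{eq:localf}. By part~(i), the leaves $\Gamma_2(u_1,u_2)$ are congruent to a minimal surface $\phi:M^2\to\Qt^2_a\times\R$ with $a^2+b^2=1$, living in the complementary $4$-plane $\Pi_1$. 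Matching the splitting $\E^6=\Pi_2\oplus\Pi_1$ against the product structure $\Qf\subset\E^6$ then reproduces \eqref{eq:localf}, so $f$ is locally congruent to the immersion of Theorem~\ref{theo:main1}.

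\emph{Next} I would dispatch the two boundary cases. When $\dim E_0(H)=1$, the computation in Subsection~4.1 already shows that $A_\eta T=\lambda T$, so $\nabla^\perp_X\eta=0$ for $X\perp T$, and by \cite[Proposition~10]{MT} the immersion lies in class $\A$; geometrically this yields either a slice $\Qt^4_\epsilon\times\{t_0\}$ (if $T\equiv 0$, excluded here by hypothesis) or a vertical product $N^3\times\R$ after noting that $\eta$ being effectively trivial off $T$ collapses the normal data. When $\dim E_0(H)=3$ one has $A_{\xi_1}\equiv 0$, forcing $H$ to be a multiple of $\xi_2=\eta/\|\eta\|$; since $\pl H,\eta\pr=0$ this is impossible unless $\eta\equiv 0$, i.e. $f$ is again a vertical cylinder $N^3\times\R$ with $N^3$ a hypersurface of $\Qt^4_\epsilon$ carrying nonzero parallel mean curvature. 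Thus the only genuinely new family is the one with $\dim E_0(H)=2$.

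\emph{Finally} the claim that $f$ belongs to class $\A$ follows directly from Corollary~\ref{cor:main1}: in the generic case the immersion is locally given by \eqref{eq:localf}, hence lies in $\A$, while in each degenerate case membership in $\A$ is immediate (the slice and the vertical cylinders are the trivial examples of class $\A$ recalled in Subsection~\ref{sec:classA}). Assembling these pieces gives the trichotomy asserted in the statement.

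\textbf{The main obstacle} I anticipate is the congruence step in the $\dim E_0(H)=2$ case: passing from the \emph{local} additive splitting $f=\Gamma_1+\Gamma_2$ to the assertion that $f$ is globally congruent to \eqref{eq:localf} requires verifying that the two planes $\Pi_1,\Pi_2$ are orthogonal and independent of the base point, and that the isometry aligning $\Gamma_1$ with the circle and $\Gamma_2$ with $\phi$ is consistent across leaves. Concretely, one must check that the frame fields $\zeta_3,\ldots,\zeta_6$ are parallel in $\E^6$ along \emph{all} tangent directions (not merely along $E_0(H)$), so that $\Pi_1$ does not rotate as $s$ varies; this is exactly where the vanishing $\nabla_{X_3}X_1=0$ and the flatness $\nabla^\perp\xi_1=\nabla^\perp\xi_2=0$ established earlier must be combined. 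Once this rigidity of the two planes is secured, the identification with \eqref{eq:localf} and the membership in class $\A$ are formal.
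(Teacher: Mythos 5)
Your proposal follows the paper's own route: the paper derives this theorem precisely by combining Lemma~\ref{lem:system} with Proposition~\ref{PROPPP2} in the case $\dim E_0(H)=2$ (yielding the additive splitting of $\tilde f$ into a circle of radius $b=1/\sqrt{c^2+1}$ and a minimal surface of $\Qt^2_a\times\R$), with the slice and vertical-cylinder alternatives coming from the degenerate cases already treated in Section~\ref{sec:codim2}, and your account of the rigidity of the two planes via $\hat\nabla\zeta_3=\hat\nabla\zeta_4=0$ is exactly the mechanism used there. The one slip is that you attribute the $\partial_t$ direction to the $2$-plane $\Pi_2$ of the circles, whereas by Proposition~\ref{PROPPP2} it is the $4$-plane $\Pi_1$ of the leaves that contains $\partial_t$; since your subsequent identification with \eqref{eq:localf} places the circle in the first two coordinates and $\phi$ in the complementary factor containing $\partial_t$, nothing breaks.
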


\bibliographystyle{amsplain}

\begin{thebibliography}{99}

\bibitem{BE} P. Baird, J. Eells,\ {\em A conservation law for harmonic
maps}, Lecture Notes in Math. 894, Springer, Berlin-New York, 1981.

\bibitem{CMOP} R. Caddeo, S. Montaldo, C. Oniciuc, P. Piu,\ 
{\em Surfaces in three-dimensional space forms with divergence-free
stress-bienergy tensor}. Ann. Mat. Pur. Appl. {\bf 193} (2014), no. 2,
529--550.

\bibitem{CW} S. Carter, A. West,\ {\em Partial tubes about immersed
manifolds}. Geom. Dedicata {\bf 54} (1995), 145--169.

\bibitem{Ch1} B. Y. Chen,\ {\em Some open problems and conjectures
on submanifolds of finite type}, Soochow J. Math. {\bf 17} (1991), no. 2,
169--188.

\bibitem{DFV} F. Dillen, J. Fastenakels, J. Van der Veken,\ {\em Rotation
hypersurfaces in $\Sp^n\times\R$ and $\Hy^n\times\R$}, Note di 
Matematica {\bf 29} (2008), 41--54.

\bibitem{ES} J. Eells, J. H. Sampson,\ {\em Harmonic Mappings of 
Riemannian Manifolds}, Amer. J. Math. {\bf 86} (1964), no. 1, 109--160. 

\bibitem{FOP} D. Fectu, C. Oniciuc, A. L. Pinheiro,\ {\em CMC
biconservative surfaces in $\Sp^n\times\R$ and $\Hy^n\times\R$},
J. Math. Anal. Appl. {\bf 425} (2015), 588--609.

\bibitem{Fu1} Y. Fu,\ {\em On bi-conservative surfaces in Minkowski
$3$-space}, J. Geom. Phys. {\bf 66} (2013), 71--79. 

\bibitem{Fu2} Y. Fu,\ {\em Explicit classification of biconservative
surfaces in Lorentz $3$-space forms}, Annali di Matematica, {\bf 194}
(2015), 805--822.

\bibitem{FT} Y. Fu, N. C. Turgay,\ {\em Complete classification of 
biconservative hypersurfaces with diagonalizable shape operator
in the Minkowski $4$-space}, Int. J. Math., {\bf 27} (2016), no. 5,
17 pp. 

\bibitem{HV} T. Hasanis, I. Vlachos,\ {\em Hypersurfaces in 
$\mathbb{E}^4$ with harmonic mean curvature vector field}, 
Math. Nachr. {\bf 172} (1995), 145--169.

\bibitem{Hi} D. Hilbert,\ {\em Die grundlagen der physik}, Math. Ann. 
{\bf 92} (1924), 1--32.

\bibitem{Ji2} G. Y. Jiang,\ {\em $2$-harmonic maps and their first and
second variational formulas}, Chinese Ann. Math., Ser. A {\bf 7} (1986),
389--402.

\bibitem{Ji} G. Y. Jiang,\ {\em The conservation law for $2$-harmonic 
maps between Riemannian manifolds}, Acta Math. Sinica {\bf 30}, 
(1987), 220--225.

\bibitem{KN} S. Kobayashi, K. Nomizu,\ {\em Foundations of Differential
Geometry I}, Interscience, New York, 1963.

\bibitem{LTV} J. H. Lira, R. Tojeiro, F. Vit\'orio,\ {\em A Bonnet 
theorem for isometric immersions into products of space forms},
Archiv der Math. {\bf 95} (2010), 469--479.

\bibitem{MaT} F. Manfio, R. Tojeiro,\ {\em Hypersurfaces with constant
sectional curvature in $\Sp^n\times\R$ and $\Hy^n\times\R$}, Illinois J. 
Math., {\bf 55} (2011), no. 1, 397--415.

\bibitem{MT} B. Mendon\c ca, R. Tojeiro,\ {\em Umbilical submanifolds
of $\Sp^n\times\R$}, Canad. J. Math. {\bf 66} (2014), no. 2, 400--428.

\bibitem{MOR} S. Montaldo, C. Oniciuc, A. Ratto,\ {\em Biconservative
Surfaces}, J. Geom. Anal. {\bf 26} (2016), 313--329.

\bibitem{MOR2} S. Montaldo, C. Oniciuc, A. Ratto, {\em Proper
Biconservative immersions into the Euclidean space}, Ann. Mat. Pura Appl.
{\bf 195} (2016), 403--422.

\bibitem{To} R. Tojeiro,\ {\em On a class of hypersurfaces in 
$\Sp^n\times\R$ and $\Hy^n\times\R$}, Bull. Braz. Math. Soc. (N. S.)
{\bf 41} (2010), no. 2, 199--209.

\bibitem{Tu} N. C. Turgay,\ {\em $H$-hypersurfaces with $3$ distinct
principal curvatures in the Euclidean spaces}, Ann. Mat. Pura Appl.
{\bf 194} (2015), 1795--1807.

\bibitem{UT} A. Upadhyay, N. C. Turgay,\ {\em A Classification of 
Biconservative Hypersurfaces in a Pseudo-Euclidean Space}, 
J. Math. Anal. Appl. {\bf 444} (2016), 1703--1720.

\end{thebibliography}

\Addresses

\end{document}